\newcommand*\bigcdot{\mathpalette\bigcdot@{.5}}
\newcommand*\bigcdot@[2]{\mathbin{\vcenter{\hbox{\scalebox{#2}{$\m@th#1\bullet$}}}}}
\newcommand{\vc}[1]{{\textbf{#1}}}
\newcommand{\fr}[1]{{\mathfrak{#1}}}
\newcommand{\B}{\mathcal{B}}
\theoremstyle{definition}
\newtheorem{theorem}{Theorem}[section]
\newtheorem{proposition}[theorem]{Proposition}
\newtheorem{lemma}[theorem]{Lemma}
\newtheorem{definition}{Definition}[section]
\begin{document}


\oddsidemargin 16.5mm
\evensidemargin 16.5mm

\thispagestyle{plain}

\begin{center}

{\large\bf  MULTIVARIATE BELL POLYNOMIALS AND DERIVATIVES OF COMPOSED FUNCTIONS
\rule{0mm}{6mm}\renewcommand{\thefootnote}{}
\footnotetext{\scriptsize 2010 Mathematics Subject Classification. 26B05, 26B12.

\rule{2.4mm}{0mm}Keywords and Phrases. 
Bell polynomials, Fa\'a di Bruno formula, multivariate chain rule, repeated derivatives, composed multivariate functions
}}

\vspace{1cc}
{\large\it Aidan Schumann}

\vspace{1cc}
\parbox{24cc}{{\small

How do we take repeated derivatives of composed multivariate functions? for one-dimensional functions, the common tools consist of the Fa\'a di Bruno formula with Bell polynomials; while there are extensions of the Fa\'a di Bruno formula, there are no corresponding Bell polynomials. In this paper, we generalize the single-variable Bell polynomials to take vector-valued arguments indexed by multi-indices which we use to rewrite the Fa\'a di Bruno formula to find derivatives of \(\vc{f}(\vc{g}(\vc{x}))\).  

}}
\end{center}

\vspace{1cc}



\section{Introduction}
\label{sect:intro}

It is useful to have an expression for taking repeated derivatives of two composed multivariate functions such as \begin{equation} \label{eqn:subject}
\partial_{x_1}^{n_1}\partial_{x_2}^{n_2}\cdots\partial_{x_d}^{n_d} \vc{f}(\vc{g}(\vc{x})),
\end{equation}
but the tools for describing these derivatives are limited. In this paper, we fill this niche. 
To build to the more general multivariate case, we first review the method for computing repeated derivatives of composed single-variable functions such as
\begin{equation*}
   \frac{d^n}{dx^n} f(g(x)).
\end{equation*} 

While the first few derivatives are easy to compute by hand, the later terms need to be calculated with repeated use of the chain and product rules: \begin{equation} \label{eqn:FaaExample}
\begin{split}
\frac{d}{dx} f(g(x))     &= f'(g(x))\cdot g'(x); \\	
\frac{d^2}{dx^2} f(g(x)) &= f'(g(x))\cdot g''(x) + f''(g(x))\cdot g'(x)^2;\\
\frac{d^3}{dx^3} f(g(x)) &= f'(g(x))\cdot g'''(x) + 3f''(g(x))\cdot g'(x)g''(x) + f'''(g(x))\cdot g'(x)^3.
\end{split}
\end{equation} 
This only gets more complicated for multivariate functions. Clearly we need a better tool.

The prototypical treatment for finding the \(n\)-th derivative of composed single-variable functions is the combinatorial approach as found in Fa\'a di Bruno's paper and its translation \cite{arbel_fa`di_2016,bruno_sullo_1855}, which give the equation \begin{multline} \label{eqn:1DCombFaa}
\frac{d^n}{dx^n} f(g(x)) = \sum 
	\frac{n!}{k_1! k_2! \ldots k_n!}
	f^{(k)}(g(x)) \cdot
	\\
	\Big(\frac{g^{(1)}(x)}{1!}\Big)^{k_1}
	\Big(\frac{g^{(2)}(x)}{2!}\Big)^{k_2}
	\ldots
	\Big(\frac{g^{(n)}(x)}{n!}\Big)^{k_n}
\end{multline}
such that we sum over all terms of the ordered \(n\)-tuple \((k_1, k_2, \ldots, k_n)\) of non-negative integers such that 
\begin{equation*}
	n = 1k_1 + 2k_2 + 3k_3 + \ldots + nk_n
\end{equation*}
and \(k\) is defined to be 
\begin{equation*}
	k = k_1 + k_2 + \ldots + k_n.
\end{equation*}
Unfortunately, this method is cumbersome and not ideal for algebraic manipulations; we need a compact form. The most convenient simplification of the single-variable Fa\'a di Bruno formula uses partial exponential Bell polynomials, but there is no such equation for composed multivariate functions. In this paper, we generalize Bell polynomials and the Fa\'a di Bruno formula to apply these results to find a compact equation for taking repeated derivatives of multivariate functions such as (\ref{eqn:subject}).

\vspace{0.5 em}

\noindent\textbf{Roadmap:} We begin by summarizing past results about Bell polynomials (\S 2) and the single-variable Fa\'a di Bruno formula (\S 3) before defining multi-indices (\S 4). We then move into original work where we first define the multivariable Bell polynomial (\S 5) and generalize standard single-variable theorems to the multivariate case before using these multivariate Bell polynomials to simplify and prove the multivariate Fa\'a di Bruno formula (\S 6).

\section{Single-Variable Bell Polynomials}\label{sect:singleBell}


Exponential Bell polynomials are so-called partition polynomials that characterize the ways of partitioning a set of size \(n\) into unordered parts \cite{charalambides_enumerative_2002,frabetti_five_2011}. The polynomials take indexed variables \(x_1,x_2, \ldots, x_n\) and give a multivariate polynomial which uses its terms and coefficients to describe the ways of partitioning the set. The index of the variable \(x_i\) is interpreted as the size of a partition set and the power to which each variable is taken is the number of parts of that size. Finally, the coefficient for each term gives the number of ways a partition of that size can be formed \cite{charalambides_enumerative_2002,frabetti_five_2011}. For instance, the term \(6x_1^2x_2\) means there are \(6\) ways of partitioning a set of size \(4\) into two sets of size \(1\) and one set of size \(2\). 
Comparing this as well as the whole of Table \ref{tab:SetPart} to  Table \ref{tab:BellPoly}, we see the same information.

In this paper, we exclusively deal with exponential Bell polynomials, so for brevity, we drop the ``exponential'' and refer to them simply as ``Bell polynomials.'' There is also a finer distinction: complete Bell polynomials and partial Bell polynomials \cite{charalambides_enumerative_2002}. For both of these varieties, the above interpretation of the terms holds, however, there is a difference in which terms are included. Complete Bell polynomials are characterized solely by the size \(n\) of the set to be partitioned and describe every possible partition. Partial Bell polynomials are characterized by a second number as well: the number of parts \(k\) into which the original set is being partitioned \cite{charalambides_enumerative_2002}.

\begin{table}[]
\begin{center}
\scriptsize
\begin{tabular}{rllll}
\multicolumn{1}{l}{}                 & \(k=1\)                              & \(k=2\)                                 & \(k=3\)                                    & \(k=4\)                                       \\ \cline{2-5} 
\multicolumn{1}{r|}{\(\{a\}\)}       & \multicolumn{1}{l|}{\(\{a\}\)}       & \multicolumn{1}{l|}{}                   & \multicolumn{1}{l|}{}                      & \multicolumn{1}{l|}{}                         \\ \cline{2-5} 
\multicolumn{1}{r|}{\(\{a,b\}\)}     & \multicolumn{1}{l|}{\(\{a,b\}\)}     & \multicolumn{1}{l|}{\(\{a\}\{b\}\)}     & \multicolumn{1}{l|}{}                      & \multicolumn{1}{l|}{}                         \\ \cline{2-5} 
\multicolumn{1}{r|}{\(\{a,b,c\}\)}   & \multicolumn{1}{l|}{\(\{a,b,c\}\)}   & \multicolumn{1}{l|}{\(\{a\}\{b,c\}\)}   & \multicolumn{1}{l|}{\(\{a\}\{b\}\{c\}\)}   & \multicolumn{1}{l|}{}                         \\
\multicolumn{1}{r|}{}                & \multicolumn{1}{l|}{}                & \multicolumn{1}{l|}{\(\{b\}\{a,c\}\)}   & \multicolumn{1}{l|}{}                      & \multicolumn{1}{l|}{}                         \\
\multicolumn{1}{r|}{}                & \multicolumn{1}{l|}{}                & \multicolumn{1}{l|}{\(\{c\}\{a,b\}\)}   & \multicolumn{1}{l|}{}                      & \multicolumn{1}{l|}{}                         \\ \cline{2-5} 
\multicolumn{1}{r|}{\(\{a,b,c,d\}\)} & \multicolumn{1}{l|}{\(\{a,b,c,d\}\)} & \multicolumn{1}{l|}{\(\{a\}\{b,c,d\}\)} & \multicolumn{1}{l|}{\(\{a,b\}\{c\}\{d\}\)} & \multicolumn{1}{l|}{\(\{a\}\{b\}\{c\}\{d\}\)} \\
\multicolumn{1}{r|}{}                & \multicolumn{1}{l|}{}                & \multicolumn{1}{l|}{\(\{b\}\{a,c,d\}\)} & \multicolumn{1}{l|}{\(\{a,c\}\{b\}\{d\}\)} & \multicolumn{1}{l|}{}                         \\
\multicolumn{1}{r|}{}                & \multicolumn{1}{l|}{}                & \multicolumn{1}{l|}{\(\{c\}\{a,b,d\}\)} & \multicolumn{1}{l|}{\(\{a,d\}\{b\}\{c\}\)} & \multicolumn{1}{l|}{}                         \\
\multicolumn{1}{r|}{}                & \multicolumn{1}{l|}{}                & \multicolumn{1}{l|}{\(\{d\}\{a,b,c\}\)} & \multicolumn{1}{l|}{\(\{b,c\}\{a\}\{d\}\)} & \multicolumn{1}{l|}{}                         \\
\multicolumn{1}{r|}{}                & \multicolumn{1}{l|}{}                & \multicolumn{1}{l|}{\(\{a,b\}\{c,d\}\)} & \multicolumn{1}{l|}{\(\{b,d\}\{a\}\{c\}\)} & \multicolumn{1}{l|}{}                         \\
\multicolumn{1}{r|}{}                & \multicolumn{1}{l|}{}                & \multicolumn{1}{l|}{\(\{a,c\}\{b,d\}\)} & \multicolumn{1}{l|}{\(\{c,d\}\{a\}\{b\}\)} & \multicolumn{1}{l|}{}                         \\
\multicolumn{1}{r|}{}                & \multicolumn{1}{l|}{}                & \multicolumn{1}{l|}{\(\{a,d\}\{b,c\}\)} & \multicolumn{1}{l|}{}                      & \multicolumn{1}{l|}{}                         \\ \cline{2-5} 
\end{tabular}
\end{center}
\caption{The partitions of sets of size 1, 2, 3, and 4. The order of the parts does not matter. The number \(k\) indicates the number of nonempty partitions.}\label{tab:SetPart}
\end{table}

\begin{table}[]
\begin{center}
\begin{tabular}{llllllll}
                                     & \(k=1\) &   & \(k=2\)              &   & \(k=3\)       &   & \(k=4\)   \\ \cline{2-8} 
\multicolumn{1}{r|}{\(B_1(x_1)=\)} & \(x_1\) &   &                      &   &               &   &           \\
\multicolumn{1}{r|}{\(B_2(x_1,x_2)=\)} & \(x_2\) & + & \(x_1^2\)            &   &               &   &           \\
\multicolumn{1}{r|}{\(B_3(x_1,x_2,x_3)=\)} & \(x_3\) & + & \(3 x_1 x_2\)          & + & \(x_1^3\)     &   &           \\
\multicolumn{1}{r|}{\(B_4(x_1,x_2,x_3,x_4)=\)} & \(x_4\) & + & \(4x_1x_3 + 3x_2^2\) & + & \(6x_1^2x_2\) & + & \(x_1^4\)
\end{tabular}
\end{center}

\caption{The first four complete exponential Bell polynomials with their corresponding partial polynomials. Reading a row in its entirety gives a complete exponential Bell polynomial while reading a single entry gives the \(k\)-th partial exponential Bell polynomial corresponding to the complete exponential Bell polynomial of that row.}\label{tab:BellPoly}
\end{table}

The complete Bell polynomials are defined using a sum indexed by the solution set of \(n\)-tuples \((k_1, \ldots, k_n)\) defined to be  \begin{equation} \label{eqn:1DCompleteSolSet}
K_n = \Big\{ (k_1, \ldots, k_n) \Big| \sum_{j=1}^n j k_j = n \Big\}
\end{equation}
which is used to define the \(n\)-th complete Bell polynomial
\begin{equation*}
   B_n(x_1, \ldots, x_n) 
   = 
   n! 
   \sum_{k_j \in K_n} \prod_{j=1}^n 
      \frac{1}{k_j!} 
      \left(
         \frac{x_j}{j!}
      \right)^{k_j},
\end{equation*}
see \cite{bell_exponential_1934,charalambides_enumerative_2002}. 
While for the single-variable Bell polynomials the above notation is sufficient, in this paper we need the more compact notation 
\begin{equation*}
B_n(x_j ; j) = B_n(x_1, \ldots, x_n)
\end{equation*}
where \(x_j\) denotes the variables \(x_1, x_2, \ldots, x_n\) and the \(j\) after the semicolon makes explicit the index over which we take the product to form the polynomial.

Similarly for partial Bell polynomials, the solution sets are defined to be the set of \(n-k+1\)-tuples \begin{equation} \label{eqn:1DPartialSolSet}
K_{n,k} = \Big\{
(k_1, \ldots, k_{n-k+1}) \Big| \sum_{j=1}^{n-k+1} j k_j = n \textrm{ and } \sum_{j=1}^{n-k+1} k_j = k \Big\},
\end{equation}
which is used to define the partial exponential Bell polynomial
\begin{equation*}
B_{n,k}(x_1,\ldots,x_{n-k+1}) = 
n! 
\sum_{k_j \in K_{n,k}} 
	\prod_{j=1}^{n-k+1}
		\frac{1}{k_j!} \left(\frac{x_j}{j!}\right)^{k_j},
\end{equation*}
see \cite{bell_exponential_1934,charalambides_enumerative_2002}. As before, we write a more compact form 
\begin{equation*}
B_{n,k}(x_j ; j) = B_{n,k}(x_1,\ldots,x_{n-k+1})
\end{equation*}

While in these definitions it is convenient to treat \(k_j\) as indices in a \(n\)- or \(n-k+1\)-tuple, this interpretation becomes a hinderance when we generalize the Bell polynomials to higher dimensions. An alternate interpretation of \(k_j\) is as a function \(k_j:\mathbb{N}\setminus\{0\} \rightarrow \mathbb{N}\), where \(\mathbb{N}\) is the set of nonnegative integers, which maps \(j \mapsto k(j)=k_j\). The defining conditions of the solution sets (\ref{eqn:1DCompleteSolSet}) and (\ref{eqn:1DPartialSolSet}) guarantee that \(k_j = 0\) for \(j>n\) in complete Bell polynomials and for \(j> n-k+1\) in partial Bell polynomials \cite{charalambides_enumerative_2002}. Using this fact we define the partial and complete bell polynomials in a compact way.

\begin{definition}[Partial and complete Bell polynomials] \label{def:1DBell}
Let \(K_n\) and \(K_{n,k}\) be the solution sets defined as 
\begin{align*}
K_n &= \Big\{ k_j \Big| \sum_{j=1}^\infty j k_j = n \Big\}
\\
K_{n,k} &= \Big\{
k_j \Big| \sum_{j=1}^{\infty} j k_j = n \textrm{ and } \sum_{j=1}^{\infty} k_j = k \Big\}.
\end{align*}
Then the complete and partial Bell polynomials are, respectively, defined to be \begin{align*}
B_{n}(x_j;j) &= 
n! 
\sum_{k_j \in K_{n}} 
	\prod_{j=1}^{\infty}
		\frac{1}{k_j!} \left(\frac{x_j}{j!}\right)^{k_j},
\\
B_{n,k}(x_j;j) &= 
n! 
\sum_{k_j \in K_{n,k}} 
	\prod_{j=1}^{\infty}
		\frac{1}{k_j!} \left(\frac{x_j}{j!}\right)^{k_j}.
\end{align*}
\end{definition}

Recall that, while the complete Bell polynomials describe all the partitions of a set, the partial Bell polynomial \(B_{n,k}\) give the number and type of partitions of a set of size \(n\) into \(k\) partitions. Thus by summing over all partial Bell polynomials with constant \(n\), we get the corresponding complete Bell polynomial \begin{equation} \label{eqn:1Dpartcomb}
B_n(x_j ; j) = \sum_{k=1}^n B_{n,k}(x_j; j),
\end{equation}
see \cite{charalambides_enumerative_2002}.

\section{The Single-Variable Fa\'a di Bruno Formula}
\label{sect:singleFaa}

The partial Bell polynomials sequester the combinatorial components of the Fa\'a di Bruno formula and allow for easier manipulation \cite{riordan_derivatives_1946}. Applying the definition of partial Bell polynomials (Definition \ref{def:1DBell}) to the Fa\'a di Bruno formula (\ref{eqn:1DCombFaa}) gives 
\begin{equation} 
\label{eqn:1DFaaBell}
\frac{d^n}{dx^n}f(g(x)) = \sum_{k=1}^n f^{(k)}(g(x)) B_{n,k}(g^{(j)}(x);j),
\end{equation}
see \cite{charalambides_enumerative_2002,frabetti_five_2011}
Comparing Table \ref{tab:BellPoly} to (\ref{eqn:FaaExample}) we confirm (\ref{eqn:1DFaaBell}) for the first few derivatives. Using this formulation, one can derive an elegant form for the Lagrange inversion theorem, as well as find elements of diffeomorphism groups \cite{charalambides_enumerative_2002,frabetti_five_2011}. 

There have been further generalizations to the Fa\'a di Bruno formula. Significant work has been done to find an elegant form for the repeated derivative of the composition of an arbitrary number of single-variable functions. Much of this work was done by Natalini and Ricci, who derived a formula for the higher-order Bell polynomials and applied it to the Fa\'a di Bruno formula for the repeated derivative of the composition of an arbitrary number of functions \cite{natalini_extension_2004,natalini_remarks_2016, natalini_higher_2017}. 

Furthermore, there has been significant work done to find the repeated derivative of \(f(\vc{x}(t))\) where \(\vc{x}(t)\) is a path through some \(d\)-dimensional space. This treatment too has received an elegant Bell polynomial representation using a generalization of Bell polynomials \cite{bernardini_multidimensional_2005, mishkov_generalization_2000, noschese_differentiation_2003}.

More relevantly to this paper, there has been work to derive a combinatorial treatment of the full multivariate Fa\'a di Bruno formula. The papers which provide this are works by Constantine and by Ma \cite{constantine_multivariate_1996, ma_higher_2009}. These previous works focus on finding a combinatorial form of the multivariate Fa\'a di Bruno formula analogous to (\ref{eqn:1DCombFaa}) for composed multivariate functions such as (\ref{eqn:subject}) and have not found a form analogous to (\ref{eqn:1DFaaBell}).

While there has been significant work to derive a combinatorial form for these derivatives, there has yet to be a corresponding generalization to  the Bell polynomial found to simplify the full multivariate Fa\'a di Bruno formula. In this paper, we present a multivariate Bell polynomial and apply it to rewrite and find a novel proof for the multivariate Fa\'a di Bruno formula.



\newpage
\section{Conventions and Multi-Indices}
\label{sect:conventions}

In this paper, we make extensive use of vectors and multi-indices to shorten and simplify equations. We use a subscript to indicate to which component we refer; for example, \(\vc{n} = (n_1, n_2, n_3, \ldots, n_d) \in \mathbb{N}^d\) such that \(\mathbb{N}\) is the set of natural numbers with \(0\). Such vectors over the natural numbers are so-called multi-indices \cite{Folland_Multi}. We use \(\vc{e}\) to denote a unit vector in \(\mathbb{N}^d\) with a subscript to refer to the position of the \(1\) and we use \(\vc{1}\), to denote the multi-index with a \(1\) as every component.

We use the following standard operations to manipulate the multi-indices \cite{Folland_Multi}.
\begin{definition}[Multi-index operations]
\label{def:veclen} If \(\vc{n} \in \mathbb{N}^d\), then 
\begin{equation*}
   |\vc{n}| = \sum_{i = 1}^d |n_i| = \sum_{i = 1}^d n_i 
   \hspace{3 em}
   \textrm{and}
   \hspace{3 em}
   \vc{n}! = \prod_{i=1}^d n_i !.
\end{equation*}
\end{definition}

In order for our definition of the multivariate Bell polynomial to be as general as possible, we define it on a vector space over an abstract field \(\mathbb{F}\). We need a way to compress exponents of components of vectors over \(\mathbb{F}\). \begin{definition} [Multi-index exponent]
\label{def:vecexp}Let \(\mathbb{F}\) be a field, \(\mathbb{F}^d\) a \(d\)-dimensional vector field over \(\mathbb{F}\), and \(\vc{x} = (x_1, x_2, x_3, \ldots, x_d) \in \mathbb{F}^d\). Let \(\vc{n} \in \mathbb{N}^d\). Then vector exponentiation is defined as 
\begin{equation*}
\vc{x}^\vc{n} = \prod_{i=1}^d (x_i)^{n_i}.
\end{equation*}
\end{definition}

For the derivative, it does not matter if we use the real or complex numbers in order to define our derivative so long as the derivative obeys the multivariate chain rule and the product rule. Thus, instead of writing \(\mathbb{R}\) or \(\mathbb{C}\), we again write \(\mathbb{F}\). Finally, in order to compress the derivative, we employ a multi-index form of Newton and Leibniz notation.

\begin{definition} [Multi-index derivative]
\label{def:vecder}
To simplify expressions with multivariate derivatives, we define the multi-index derivative to be: 
\begin{equation*}
   \partial^{n_1}_{x_1} \partial^{n_2}_{x_2} \ldots \partial^{n_d}_{x_d} f(\vc{x})
   =
   \partial_\vc{x}^\vc{n}f(\vc{x})
   =
   f^{(\vc{n})}(\vc{x}).
\end{equation*}
We use \(\partial_\vc{x}^\vc{n}f(\vc{x})\) and \(f^{(\vc{n})}(\vc{x})\) interchangeably to simplify equations depending on context.
\end{definition}

\section{Multivariate Bell Polynomials}
\label{sect:bell}

In order to simplify the notation for the multivariate Fa\'a di Bruno formula, we introduce a novel definition of the multivariate Bell polynomial which is analogous to the single-variate Bell polynomial in Definition \ref{def:1DBell}.

\begin{definition}[Multivariate Bell polynomials] \label{def:belldef}
Let \(\vc{j} \in \mathbb{N}^{d_1}\setminus \{\vc{0}\}\) be a vector index and let the variables \(\vc{x}_\vc{j} \in \mathbb{F}^{d_2}\) be vectors in a \(d_2\)-dimensional vector field over a field \(\mathbb{F}\). Then, given 
\(\vc{n} \in \mathbb{N}^{d_1}\), 
\(\vc{k} \in \mathbb{N}^{d_2}\), we get the solution sets \(K_{\vc{n}}\) and \(K_{\vc{n},\vc{k}}\) of functions \(\mathbb{N}^{d_1}\setminus \{\vc{0}\} \ni \vc{j} \mapsto \vc{k}_\vc{j} \in \mathbb{N}^{d_2}\)
\begin{align} 
  \label{eqn:solsetComp}
   K_{\vc{n}} 
   &= 
   \Big\{   
      \vc{k}_\vc{j} :  
      \sum_{|\vc{j}| = 1}^\infty \vc{j} |\vc{k}_\vc{j}| = \vc{n}
   \Big\}
   \\
   \label{eqn:solset}
   K_{\vc{n},\vc{k}} 
   &= 
   \Big\{   
      \vc{k}_\vc{j} :  
      \sum_{|\vc{j}| = 1}^\infty \vc{j} |\vc{k}_\vc{j}| = \vc{n}
      \textrm{ and } 
      \sum_{|\vc{j}|=1}^\infty \vc{k}_\vc{j} = \vc{k}
   \Big\}
\end{align}
which we use to define the complete and partial multivariate Bell polynomial, respectively, 
\begin{align} \label{eqn:belldef}
   \B_{\vc{n}} (\vc{x}_\vc{j} ; \vc{j})
   &= 
   \vc{n}! 
   \sum_{\vc{k}_\vc{j} \in K_{\vc{n}}} 
      \prod_{|\vc{j}| = 1}^\infty 
         \frac{1}{\vc{k}_\vc{j}!} 
         \left(   
            \frac{\vc{x}_\vc{j}}{\vc{j}!}   
         \right)^{\vc{k}_\vc{j}}
   \\ \label{eqn:belldefPart}
   \B_{\vc{n},\vc{k}} (\vc{x}_\vc{j} ; \vc{j})
   &= 
   \vc{n}! 
   \sum_{\vc{k}_\vc{j} \in K_{\vc{n},\vc{k}}} 
      \prod_{|\vc{j}| = 1}^\infty 
         \frac{1}{\vc{k}_\vc{j}!} 
         \left(   
            \frac{\vc{x}_\vc{j}}{\vc{j}!}   
         \right)^{\vc{k}_\vc{j}}.
\end{align}
\end{definition}


The following propositions follow directly from Definition \ref{def:belldef} and are mostly based on analogous propositions for the one-dimensional Bell polynomials found in \cite{charalambides_enumerative_2002}. We begin by finding the values of \(\vc{k}\) which make the multivariate partial Bell polynomial disappear.

\begin{proposition}
If \(|\vc{k}| > |\vc{n}|\), then \(K_{\vc{n},\vc{k}}\) is empty and \(\B_{\vc{n},\vc{k}}(\vc{x}_\vc{j} ; \vc{j}) = 0\). 
\end{proposition}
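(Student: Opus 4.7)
The plan is to show that any member of $K_{\vc{n},\vc{k}}$ would force $|\vc{k}| \le |\vc{n}|$, so that the hypothesis $|\vc{k}| > |\vc{n}|$ makes the set empty; once $K_{\vc{n},\vc{k}}$ is empty, the defining sum (\ref{eqn:belldefPart}) is an empty sum and therefore equals $0$, so the second conclusion is automatic.

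The key manipulation is to take $|\cdot|$ (i.e.\ sum the components per Definition \ref{def:veclen}) on both defining equations of $K_{\vc{n},\vc{k}}$. First, suppose $\vc{k}_\vc{j} \in K_{\vc{n},\vc{k}}$. Applying $|\cdot|$ to the equation $\sum_{|\vc{j}|=1}^\infty \vc{k}_\vc{j} = \vc{k}$ and using that all $\vc{k}_\vc{j}$ have non-negative integer components gives
\begin{equation*}
   \sum_{|\vc{j}|=1}^\infty |\vc{k}_\vc{j}| = |\vc{k}|.
\end{equation*}
Applying $|\cdot|$ to the equation $\sum_{|\vc{j}|=1}^\infty \vc{j}\,|\vc{k}_\vc{j}| = \vc{n}$ yields
\begin{equation*}
   \sum_{|\vc{j}|=1}^\infty |\vc{j}|\,|\vc{k}_\vc{j}| = |\vc{n}|.
\end{equation*}

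The main (and only) inequality step is that every index $\vc{j}$ appearing in the sum satisfies $|\vc{j}|\ge 1$, so $|\vc{j}|\,|\vc{k}_\vc{j}| \ge |\vc{k}_\vc{j}|$ termwise; summing gives
\begin{equation*}
   |\vc{k}| \;=\; \sum_{|\vc{j}|=1}^\infty |\vc{k}_\vc{j}| \;\le\; \sum_{|\vc{j}|=1}^\infty |\vc{j}|\,|\vc{k}_\vc{j}| \;=\; |\vc{n}|,
\end{equation*}
contradicting the hypothesis $|\vc{k}|>|\vc{n}|$. Hence $K_{\vc{n},\vc{k}}=\emptyset$, and then $\B_{\vc{n},\vc{k}}(\vc{x}_\vc{j};\vc{j}) = 0$ by convention on empty sums.

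I do not foresee a real obstacle here; the statement is essentially a bookkeeping fact about multi-index partitions. The only subtlety worth stating carefully is that in the sum indexed by $|\vc{j}|\ge 1$, the condition $|\vc{j}|\ge 1$ is exactly what drives the inequality, mirroring the one-dimensional fact that $\sum_j k_j \le \sum_j j k_j$ when $j\ge 1$.
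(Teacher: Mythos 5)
Your proof is correct and follows essentially the same route as the paper's: apply $|\cdot|$ to the two defining equations of $K_{\vc{n},\vc{k}}$ and use $|\vc{j}|\ge 1$ to conclude $|\vc{k}|\le|\vc{n}|$ for any solution, hence emptiness and a vanishing empty sum. Your write-up is in fact slightly more careful than the paper's in justifying $\big|\sum_{\vc{j}}\vc{k}_\vc{j}\big|=\sum_{\vc{j}}|\vc{k}_\vc{j}|$ via non-negativity and in noting the empty-sum convention for the second conclusion.
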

\begin{proof}
Given a solution to the equations \(\sum_{|\vc{j}| = 1}^{\infty} \vc{k}_\vc{j} = \vc{k}\)
 and  
\(\sum_{|\vc{j}| = 1}^{\infty} \vc{j} |\vc{k}_\vc{j}| = \vc{n}\), then \begin{equation*}
   |\vc{n}| 
   = 
   \sum_{|\vc{j}| = 1}^{\infty} |\vc{j}| |\vc{k}_\vc{j}| 
   \geq 
   \sum_{|\vc{j}| = 1}^{\infty} |\vc{k}_\vc{j}| 
   = 
   |\vc{k}|.
\end{equation*}
Thus no solutions exist if \(|\vc{k}|>|\vc{n}|\). 
\end{proof}

We next show that there is a maximum size of \(|\vc{j}|\) so that, if \(|\vc{j}|\) is bigger than the maximum, then \(\vc{k}_{\vc{j}}\) in Definition \ref{def:belldef} must be zero. Thus the infinite sums and products in  Definition \ref{def:belldef} are, in fact, finite.

\begin{proposition} \label{prop:maxj}
Let \( \vc{j}_0 \in \mathbb{N}^{d_1} \setminus \{\vc{0}\} \). If 
\( \vc{k}_{\vc{j}_0} \in K_{\vc{n}} \) and  
\( |\vc{j}_0|>|\vc{n}| \), then 
\(\vc{k}_{\vc{j}_0} = \vc{0}\). 
Similarly, if \(\vc{k}_{\vc{j}_0} \in K_{\vc{n},\vc{k}}\) and  \(|\vc{j}_0|>|\vc{n}| - |\vc{k}| + 1\), then \(\vc{k}_{\vc{j}_0} = \vc{0}\).
\end{proposition}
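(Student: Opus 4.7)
The plan is to prove both statements by contradiction, assuming $\vc{k}_{\vc{j}_0} \neq \vc{0}$ (so that $|\vc{k}_{\vc{j}_0}| \geq 1$, since the components are nonnegative integers) and then deriving a bound on $|\vc{j}_0|$ by applying $|\cdot|$ to the defining sums of $K_\vc{n}$ and $K_{\vc{n},\vc{k}}$ from Definition \ref{def:belldef}.

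For the complete case, I would take the magnitude of both sides of $\sum_{|\vc{j}|=1}^\infty \vc{j}|\vc{k}_\vc{j}| = \vc{n}$. Since all terms $\vc{j}|\vc{k}_\vc{j}|$ are vectors with nonnegative entries, the $|\cdot|$ operator from Definition \ref{def:veclen} distributes over the sum, yielding $|\vc{n}| = \sum_{|\vc{j}|=1}^\infty |\vc{j}||\vc{k}_\vc{j}|$. Isolating the $\vc{j}_0$ term and dropping the remaining nonnegative contributions gives $|\vc{n}| \geq |\vc{j}_0||\vc{k}_{\vc{j}_0}| \geq |\vc{j}_0|$, which contradicts $|\vc{j}_0| > |\vc{n}|$.

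For the partial case I would do the same magnitude trick on both defining equations to get $|\vc{n}| = \sum_{|\vc{j}|=1}^\infty |\vc{j}||\vc{k}_\vc{j}|$ and $|\vc{k}| = \sum_{|\vc{j}|=1}^\infty |\vc{k}_\vc{j}|$. Then I would split off the $\vc{j}_0$ term from the first sum and bound the rest below using $|\vc{j}| \geq 1$ (valid since the index ranges over $|\vc{j}| \geq 1$), obtaining
\begin{equation*}
|\vc{n}| \geq |\vc{j}_0||\vc{k}_{\vc{j}_0}| + \sum_{\vc{j} \neq \vc{j}_0} |\vc{k}_\vc{j}| = |\vc{k}_{\vc{j}_0}|(|\vc{j}_0|-1) + |\vc{k}|.
\end{equation*}
Invoking $|\vc{k}_{\vc{j}_0}| \geq 1$ then rearranges to $|\vc{j}_0| \leq |\vc{n}| - |\vc{k}| + 1$, contradicting the hypothesis.

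I do not expect any real obstacle here; the only subtle point is justifying that $|\cdot|$ commutes with the vector sum, which holds because every $\vc{k}_\vc{j}$ and $\vc{j}$ has nonnegative entries. Everything else is a direct bookkeeping argument analogous to the single-variable fact cited from \cite{charalambides_enumerative_2002} just after equation (\ref{eqn:1DPartialSolSet}).
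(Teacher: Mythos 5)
Your proposal is correct and follows essentially the same route as the paper: take $|\cdot|$ of the defining sums, isolate the $\vc{j}_0$ term, and use nonnegativity of the remaining terms (the paper's Part 2 writes this as $|\vc{n}|-|\vc{k}|=\sum(|\vc{j}|-1)|\vc{k}_\vc{j}|$, which is the same bookkeeping). Your explicit contradiction framing with $|\vc{k}_{\vc{j}_0}|\geq 1$ is, if anything, slightly cleaner than the paper's strict-inequality chain.
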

\begin{proof} We treat the two parts of this theorem seperatley.

\noindent \textbf{Part 1:} If 
\( \vc{k}_{\vc{j}_0} \in K_{\vc{n}} \) and  
\( |\vc{j}_0|>|\vc{n}| \), then 
\(\vc{k}_{\vc{j}_0} = \vc{0}\). 

We begin with \( \vc{k}_{\vc{j}_0} \in K_{\vc{n}} \) and  \( |\vc{j}_0|>|\vc{n}| \). Using the definition of the solution set \(K_\vc{n}\) in (\ref{eqn:solsetComp}), we get 
\begin{equation*}
   |\vc{n}| = \sum_{|\vc{j}| = 1}^{\infty} |\vc{j}||\vc{k}_\vc{j}|.
\end{equation*}
Note that every term in the sum on the right hand side is positive. Subtracting \(|\vc{j}_0||\vc{k}_{\vc{j}_0}|\) from both sides, we get 
\begin{equation*}
   |\vc{n}| - |\vc{j}_0||\vc{k}_{\vc{j}_0}| 
   = 
   \sum_{|\vc{j}| = 1}^{\infty} |\vc{j}||\vc{k}_\vc{j}| 
   - |\vc{j}_0||\vc{k}_{\vc{j}_0}|
   \geq 0.
\end{equation*} 
From the hypothesis, \(|\vc{j}_0| > |\vc{n}|\), so \(|\vc{k}_{\vc{j}_0}| = 0\) which only happens when \(\vc{k}_{\vc{j}_0} = \vc{0}\).

\noindent \textbf{Part 2:} If \(\vc{k}_{\vc{j}_0} \in K_{\vc{n},\vc{k}}\) and  \(|\vc{j}_0|>|\vc{n}| - |\vc{k}| + 1\), then \(\vc{k}_{\vc{j}_0} = \vc{0}\).

Let \(\vc{k}_{\vc{j}_0} \in K_{\vc{n},\vc{k}}\) and  \(|\vc{j}_0|>|\vc{n}| - |\vc{k}| + 1\). Consider \(|\vc{n}| - |\vc{k}|\). Using the definition of the solution set \(K_{\vc{n},\vc{k}}\) in (\ref{eqn:solset}), we get 
\begin{equation*}
   |\vc{n}| - |\vc{k}| 
   = 
   \sum_{|\vc{j}| = 1}^{\infty} |\vc{j}||\vc{k}_\vc{j}| 
   - 
   \sum_{|\vc{j}| = 1}^{\infty}|\vc{k}_j| 
   = 
   \sum_{|\vc{j}| = 1}^{\infty} (|\vc{j}| - 1) |\vc{k}_\vc{j}|.
\end{equation*}
Because \(|\vc{j}| \geq 1\), \(|\vc{j}| - 1 \geq 0\). Therefore, 
\begin{equation*}
   \sum_{|\vc{j}| = 1}^{\infty} (|\vc{j}| - 1) |\vc{k}_\vc{j}| 
   \geq 
   (|\vc{j}_0| - 1) |\vc{k}_{\vc{j}_0}| 
   > 
   (|\vc{n}| - |\vc{k}|)|\vc{k}_{\vc{j}_0}|
\end{equation*}
However, this means that 
\begin{equation*}
   |\vc{n}| - |\vc{k}| 
   > 
   (|\vc{n}| - |\vc{k}|)|\vc{k}_{\vc{j}_0}|,
\end{equation*}
which means \( \vc{k}_{\vc{j}_0} = \vc{0} \).
\end{proof}

Proposition \ref{prop:maxj} means that despite summing over an infinite number of terms in the sums \(\sum_{|\vc{j}| = 1}^{\infty} \vc{k}_\vc{j} = \vc{k}\) and \(\sum_{|\vc{j}| = 1}^{\infty} \vc{j} |\vc{k}_\vc{j}| = \vc{n}\), because, after a point, every \(\vc{k}_\vc{j}\) is identically \(\vc{0}\), these sums are finite.

We now relate the complete and partial multivariate Bell polynomial to one another with a proposition analogous to (\ref{eqn:1Dpartcomb}).
\begin{proposition}
Given constant \(\vc{n} \in \mathbb{N}^{d_1}\) and variables \(\vc{x}_\vc{j} \in \mathbb{N}^{d_2}\), 
\begin{equation} \label{eqn:compAndPart}
    \B_{\vc{n}} (\vc{x}_\vc{j} ; \vc{j})
    =
    \sum_{\vc{k} \in \mathbb{N}^{d_2}}
    \B_{\vc{n},\vc{k}} (\vc{x}_\vc{j} ; \vc{j})
\end{equation}
\end{proposition}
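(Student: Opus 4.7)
The plan is to show that the index set $K_{\vc{n}}$ for the complete Bell polynomial decomposes as a disjoint union $K_{\vc{n}} = \bigsqcup_{\vc{k} \in \mathbb{N}^{d_2}} K_{\vc{n},\vc{k}}$ of the index sets for the partial Bell polynomials. Since the summand
\begin{equation*}
\prod_{|\vc{j}| = 1}^\infty \frac{1}{\vc{k}_\vc{j}!} \left(\frac{\vc{x}_\vc{j}}{\vc{j}!}\right)^{\vc{k}_\vc{j}}
\end{equation*}
appearing in (\ref{eqn:belldef}) and (\ref{eqn:belldefPart}) is identical, once the disjoint-union decomposition is in hand the identity (\ref{eqn:compAndPart}) will follow immediately by splitting the single sum over $K_{\vc{n}}$ into a double sum over $\vc{k}$ and then over $K_{\vc{n},\vc{k}}$, and multiplying by the common factor $\vc{n}!$.

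The first step is to associate to every $\vc{k}_{\vc{j}} \in K_{\vc{n}}$ a total $\vc{k} := \sum_{|\vc{j}|=1}^{\infty} \vc{k}_{\vc{j}} \in \mathbb{N}^{d_2}$. Here is where a subtlety appears: this sum is a priori infinite. The main (minor) obstacle is to verify that it is actually well-defined, and this is precisely what Proposition \ref{prop:maxj} gives us. Since any $\vc{k}_{\vc{j}} \in K_{\vc{n}}$ is forced to vanish whenever $|\vc{j}| > |\vc{n}|$, only finitely many terms in $\sum \vc{k}_{\vc{j}}$ are nonzero, so $\vc{k}$ lies in $\mathbb{N}^{d_2}$.

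Once $\vc{k}$ is defined, $\vc{k}_\vc{j}$ satisfies both defining conditions of $K_{\vc{n},\vc{k}}$ in (\ref{eqn:solset}), hence belongs to it. Conversely, every element of $K_{\vc{n},\vc{k}}$ for any $\vc{k}$ satisfies the single condition defining $K_{\vc{n}}$. Disjointness across distinct $\vc{k}$ is immediate because a function $\vc{k}_{\vc{j}}$ can belong to $K_{\vc{n},\vc{k}}$ for at most one value of $\vc{k}$, namely $\vc{k} = \sum_{|\vc{j}|=1}^\infty \vc{k}_{\vc{j}}$. This establishes the disjoint-union decomposition.

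Finally, I would substitute this decomposition into (\ref{eqn:belldef}):
\begin{equation*}
\B_{\vc{n}}(\vc{x}_\vc{j};\vc{j})
= \vc{n}! \sum_{\vc{k} \in \mathbb{N}^{d_2}} \sum_{\vc{k}_\vc{j} \in K_{\vc{n},\vc{k}}} \prod_{|\vc{j}|=1}^\infty \frac{1}{\vc{k}_\vc{j}!}\left(\frac{\vc{x}_\vc{j}}{\vc{j}!}\right)^{\vc{k}_\vc{j}}
= \sum_{\vc{k}\in\mathbb{N}^{d_2}} \B_{\vc{n},\vc{k}}(\vc{x}_\vc{j};\vc{j}),
\end{equation*}
which is (\ref{eqn:compAndPart}). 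Note that the outer sum over $\vc{k}$ is also effectively finite: by the proposition at the start of this section, $K_{\vc{n},\vc{k}}$ is empty whenever $|\vc{k}|>|\vc{n}|$, so only finitely many partial Bell polynomials contribute nonzero terms.
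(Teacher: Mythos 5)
Your proposal is correct and follows essentially the same route as the paper: both decompose \(K_{\vc{n}}\) as the disjoint union \(\bigcup_{\vc{k} \in \mathbb{N}^{d_2}} K_{\vc{n},\vc{k}}\), invoke Proposition \ref{prop:maxj} to guarantee that \(\vc{k} = \sum_{|\vc{j}|=1}^{\infty}\vc{k}_\vc{j}\) is well defined, and then split the sum in Definition \ref{def:belldef}. If anything, your phrasing of disjointness as pairwise (each \(\vc{k}_\vc{j}\) lies in at most one \(K_{\vc{n},\vc{k}}\)) is slightly cleaner than the paper's statement that the total intersection is empty, but the underlying argument is identical.
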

\begin{proof}
From Definition \ref{def:belldef}, we rewrite (\ref{eqn:compAndPart}) as
\begin{equation*}
   \vc{n}! 
   \sum_{\vc{k}_\vc{j} \in K_{\vc{n}}} 
      \prod_{|\vc{j}| = 1}^\infty 
         \frac{1}{\vc{k}_\vc{j}!} 
         \left(   
            \frac{\vc{x}_\vc{j}}{\vc{j}!}   
         \right)^{\vc{k}_\vc{j}}
    =
    \vc{n}!
    \sum_{\vc{k} \in \mathbb{N}^{d_2}} 
   \sum_{\vc{k}_\vc{j} \in K_{\vc{n},\vc{k}}} 
      \prod_{|\vc{j}| = 1}^\infty 
         \frac{1}{\vc{k}_\vc{j}!} 
         \left(   
            \frac{\vc{x}_\vc{j}}{\vc{j}!}   
         \right)^{\vc{k}_\vc{j}}.
\end{equation*} 
Thus we will be done if we can show that 
\begin{equation*}
   \bigcup_{\vc{k} \in \mathbb{N}^{d_2}} K_{\vc{n},\vc{k}}
   =
   K_{\vc{n}}
   \hspace{3 em}
   \textrm{and}
   \hspace{3 em}
   \bigcap_{\vc{k} \in \mathbb{N}^{d_2}} K_{\vc{n},\vc{k}}
   =
   \varnothing.
\end{equation*}
From the definition of the solution sets in Definition \ref{def:belldef}, we know that every element of \(K_{\vc{n},\vc{k}}\) satisfies the condition for inclusion in \(K_{\vc{n}}\). Thus 
\begin{equation*}
   \bigcup_{\vc{k} \in \mathbb{N}^{d_2}} K_{\vc{n},\vc{k}}
   \subseteq
   K_{\vc{n}}.
\end{equation*}
Furthermore, by Proposition \ref{prop:maxj}, for every element of \(K_{\vc{n}}\) there exists some \(\vc{k}\) such that 
\begin{equation*}
   \sum_{|\vc{j}|=1}^\infty \vc{k}_\vc{j} = \vc{k}
\end{equation*}
so there will always be a \(K_{\vc{n},\vc{k}}\) to which the \(\vc{k}_\vc{j}\) belongs. Thus 
\begin{equation*}
   \bigcup_{\vc{k} \in \mathbb{N}^{d_2}} K_{\vc{n},\vc{k}}
   =
   K_{\vc{n}}.
\end{equation*}

Finally, if \(\bigcap_{\vc{k} \in \mathbb{N}^{d_2}} K_{\vc{n},\vc{k}}\) were nonempty, there would exist some function \(\vc{k}_\vc{j}\) and constants \(\vc{k}_1\) and \(\vc{k}_2\) so that 
\begin{equation*}
   \sum_{|\vc{j}|=1}^\infty \vc{k}_\vc{j} 
   = 
   \vc{k}_1 
   \neq
   \vc{k}_2
   =
   \sum_{|\vc{j}|=1}^\infty \vc{k}_\vc{j},
\end{equation*}
which is a contradiction. Thus 
\begin{equation*}
   \bigcap_{\vc{k} \in \mathbb{N}^{d_2}} K_{\vc{n},\vc{k}}
   =
   \varnothing.
\end{equation*}
\end{proof}

We now examine the special case where both \(\vc{n}\) and \(\vc{k}\) are zero-vectors.

\begin{proposition}
If \(\vc{n} = \vc{0} \in \mathbb{N}^{d_1}\) and \(\vc{k} = \vc{0} \in \mathbb{N}^{d_2}\), then 
\begin{equation*}
   \B_{\vc{0}} (\vc{x}_\vc{j} ; \vc{j})
   =
   1
   =
   \B_{\vc{0},\vc{0}} (\vc{x}_\vc{j} ; \vc{j}).
\end{equation*}
\end{proposition}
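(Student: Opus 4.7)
The plan is to show that in both cases the solution set consists of a single function, namely the one that sends every multi-index \(\vc{j}\) to \(\vc{0} \in \mathbb{N}^{d_2}\), and then to observe that the corresponding term in the sum equals \(1\) by the standard empty-product conventions.

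First I would analyze \(K_{\vc{0}}\). If \(\vc{k}_\vc{j} \in K_{\vc{0}}\), then \(\sum_{|\vc{j}|=1}^\infty \vc{j}|\vc{k}_\vc{j}| = \vc{0}\). Taking the \(|\cdot|\) norm of both sides and using that every summand is nonnegative yields
\begin{equation*}
   0 = \sum_{|\vc{j}|=1}^\infty |\vc{j}||\vc{k}_\vc{j}|,
\end{equation*}
and since \(|\vc{j}| \geq 1\) for every term in the sum, we must have \(|\vc{k}_\vc{j}| = 0\), i.e.\ \(\vc{k}_\vc{j} = \vc{0}\), for every \(\vc{j}\). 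This identically-zero function does belong to \(K_{\vc{0}}\), so it is the unique element. The same argument (together with the additional constraint \(\sum_{|\vc{j}|=1}^\infty \vc{k}_\vc{j} = \vc{0}\), which is trivially satisfied by the identically-zero function) shows that \(K_{\vc{0},\vc{0}}\) also consists of exactly this one function.

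Next I would substitute this unique element into Definition \ref{def:belldef}. Since \(\vc{0}! = 1\) by Definition \ref{def:veclen} and \(\vc{y}^{\vc{0}} = 1\) for every \(\vc{y} \in \mathbb{F}^{d_2}\) by Definition \ref{def:vecexp} (both being empty products over the \(d_2\) components), each factor
\begin{equation*}
   \frac{1}{\vc{k}_\vc{j}!}\left(\frac{\vc{x}_\vc{j}}{\vc{j}!}\right)^{\vc{k}_\vc{j}}
   = \frac{1}{\vc{0}!}\left(\frac{\vc{x}_\vc{j}}{\vc{j}!}\right)^{\vc{0}} = 1,
\end{equation*}
so the infinite product is \(1\), and the outer prefactor \(\vc{0}! = 1\) leaves the total equal to \(1\).

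There is no real obstacle here; the only thing to watch is making the empty-product conventions explicit, since the formulas in Definition \ref{def:belldef} nominally involve an infinite product. Invoking Definitions \ref{def:veclen} and \ref{def:vecexp} makes this transparent, and the identity \(\B_{\vc{0}}(\vc{x}_\vc{j};\vc{j}) = 1 = \B_{\vc{0},\vc{0}}(\vc{x}_\vc{j};\vc{j})\) follows.
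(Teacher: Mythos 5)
Your proposal is correct and follows essentially the same route as the paper: show both solution sets reduce to the single identically-zero function, then evaluate the resulting product of trivial factors as \(1\). You spell out the norm argument and the empty-product conventions a bit more explicitly than the paper does, but the substance is identical.
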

\begin{proof}
By the definition of the solution sets \(K_\vc{0}\) and \(K_{\vc{0},\vc{0}}\) in (\ref{eqn:solsetComp}) and (\ref{eqn:solset}), we are looking for functions which satisfy 
\(\sum_{|\vc{j}| = 1}^\infty \vc{j} |\vc{k}_\vc{j}| = \vc{0}\)
 and, for \(K_{\vc{0},\vc{0}}\), 
\(\sum_{|\vc{j}|=1}^\infty \vc{k}_\vc{j} = \vc{0}\).
The only way for \(\vc{k}_\vc{j}\) to satisfy these equations is if \(\vc{k}_\vc{j}\) is identically \(\vc{0}\) for all \(\vc{j}\). Therefore 
\begin{equation*}
K_\vc{0} = \{\vc{0}\} = K_{\vc{0},\vc{0}}.
\end{equation*}
Thus, by (\ref{eqn:belldef}) and (\ref{eqn:belldefPart}), \(\B_{\vc{0}}\) and \(\B_{\vc{0},\vc{0}}\) become 
\begin{equation*}
   \sum_{\vc{k}_\vc{j} \in \{\vc{0}\}}
      \prod_{|\vc{j}| = 1}^\infty
      \frac{1}{\vc{k}_\vc{j}!}
      \left(  
         \frac{\vc{x}_\vc{j}}{\vc{j}!}  
      \right)^{\vc{k}_\vc{j}}
   =
   \prod_{|\vc{j}| = 1}^\infty
   \frac{1}{\vc{0}!}
   \left(  
      \frac{\vc{x}_\vc{j}}{\vc{j}!}  
   \right)^{\vc{0}}
   =
   1.
\end{equation*}
\end{proof}

For the remainder of the paper, we focus on the partial multivariate Bell polynomial, so we drop the ``partial'' for brevity. 

\begin{proposition} If \(\vc{a} \in \mathbb{F}^{d_1}\) and \(b \in \mathbb{F}\) are constants, then 
\begin{equation*}
\B_{\vc{n},\vc{k}}( \vc{a}^\vc{j} b \vc{x}_\vc{j} ; \vc{j} ) 
= 
\vc{a}^\vc{n} b^{|\vc{k}|}
\B_{\vc{n},\vc{k}}( \vc{x}_\vc{j} ; \vc{j} ).
\end{equation*}
\end{proposition}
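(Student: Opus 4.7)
The plan is to start from the definition of $\B_{\vc{n},\vc{k}}$ in equation (\ref{eqn:belldefPart}), substitute $\vc{a}^\vc{j} b \vc{x}_\vc{j}$ for $\vc{x}_\vc{j}$, and then isolate the $\vc{a}$ and $b$ factors from each term in the product so they can be pulled outside the sum. The constants $\vc{a}^\vc{j}$ and $b$ are scalars in $\mathbb{F}$ (the former by Definition \ref{def:vecexp} applied to $\vc{a} \in \mathbb{F}^{d_1}$ and $\vc{j} \in \mathbb{N}^{d_1}$), so for a scalar $c \in \mathbb{F}$ and any multi-index $\vc{k}_\vc{j} \in \mathbb{N}^{d_2}$, Definition \ref{def:vecexp} gives $(c\,\vc{x}_\vc{j})^{\vc{k}_\vc{j}} = c^{|\vc{k}_\vc{j}|}\,\vc{x}_\vc{j}^{\vc{k}_\vc{j}}$. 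Applying this to $c = \vc{a}^\vc{j} b$ yields
\begin{equation*}
   \left(\frac{\vc{a}^\vc{j} b \vc{x}_\vc{j}}{\vc{j}!}\right)^{\vc{k}_\vc{j}}
   =
   (\vc{a}^\vc{j})^{|\vc{k}_\vc{j}|} \, b^{|\vc{k}_\vc{j}|} \left(\frac{\vc{x}_\vc{j}}{\vc{j}!}\right)^{\vc{k}_\vc{j}}.
\end{equation*}

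Next, I would collect the $b$ factors and the $\vc{a}$ factors across the infinite product. For the scalar $b$, converting the product of powers into a single power gives $\prod_{|\vc{j}|=1}^\infty b^{|\vc{k}_\vc{j}|} = b^{\sum |\vc{k}_\vc{j}|}$, and since $\vc{k}_\vc{j} \in K_{\vc{n},\vc{k}}$ forces $\sum_{|\vc{j}|=1}^\infty \vc{k}_\vc{j} = \vc{k}$ (with all components nonnegative), the exponent collapses to $|\vc{k}|$. For the vector factor, expanding $(\vc{a}^\vc{j})^{|\vc{k}_\vc{j}|} = \prod_{i=1}^{d_1} a_i^{j_i |\vc{k}_\vc{j}|}$ by Definition \ref{def:vecexp} and swapping the order of the products gives
\begin{equation*}
   \prod_{|\vc{j}|=1}^\infty (\vc{a}^\vc{j})^{|\vc{k}_\vc{j}|}
   =
   \prod_{i=1}^{d_1} a_i^{\sum_{|\vc{j}|=1}^\infty j_i |\vc{k}_\vc{j}|}.
\end{equation*}
The $i$-th component of the vector identity $\sum_{|\vc{j}|=1}^\infty \vc{j}|\vc{k}_\vc{j}| = \vc{n}$, coming from $\vc{k}_\vc{j} \in K_{\vc{n},\vc{k}}$, reduces the exponent to $n_i$, so the whole expression becomes $\vc{a}^\vc{n}$.

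Since neither $\vc{a}^\vc{n}$ nor $b^{|\vc{k}|}$ depends on the index of summation, I can pull them outside the sum over $K_{\vc{n},\vc{k}}$, and what remains inside is exactly the original $\B_{\vc{n},\vc{k}}(\vc{x}_\vc{j} ; \vc{j})$, yielding the claim. The main obstacle is really just bookkeeping: keeping the scalar-versus-vector distinction straight when applying Definition \ref{def:vecexp} (so that $b^{\vc{k}_\vc{j}}$ collapses to $b^{|\vc{k}_\vc{j}|}$ rather than being misread as a multi-index exponent), and interchanging the two products in the $\vc{a}$ calculation cleanly so that the constraint from $K_{\vc{n},\vc{k}}$ can be applied componentwise.
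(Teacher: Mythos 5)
Your proposal is correct and follows essentially the same route as the paper's proof: expand the definition, factor out $(\vc{a}^{\vc{j}}b)^{|\vc{k}_{\vc{j}}|}$ from each term, and use the two defining constraints of $K_{\vc{n},\vc{k}}$ to collapse the product of these factors to $\vc{a}^{\vc{n}}b^{|\vc{k}|}$ before pulling it outside the sum. You simply spell out the componentwise bookkeeping (the swap of products over $\vc{j}$ and over $i$) that the paper leaves implicit.
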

\begin{proof}
We proceed directly from Definition \ref{def:belldef}. 
\begin{align*}
   \B_{\vc{n},\vc{k}}( \vc{a}^\vc{j} b \vc{x}_\vc{j} ; \vc{j} ) 
   &=
   \vc{n}! \sum_{\vc{k}_\vc{j} \in K_{\vc{n},\vc{k}}} \prod_{|\vc{j}| = 1}^\infty 
   \frac{1}{\vc{k}_\vc{j}!} 
   \left(   
      \frac{\vc{a}^\vc{j} b \vc{x}_\vc{j}}{\vc{j}!}   
   \right)^{\vc{k}_\vc{j}}
   \\
   &=
   \vc{n}! \sum_{\vc{k}_\vc{j} \in K_{\vc{n},\vc{k}}} \prod_{|\vc{j}| = 1}^\infty 
   \frac{1}{\vc{k}_\vc{j}!}
   \left(\vc{a}^\vc{j} b\right)^{|\vc{k}_\vc{j}|} 
   \left(   
      \frac{\vc{x}_\vc{j}}{\vc{j}!}   
   \right)^{\vc{k}_\vc{j}}.
\end{align*}
Using the condition that \(\sum_{|\vc{j}| = 1}^{\infty} \vc{k}_\vc{j} = \vc{k}\) and \(\sum_{|\vc{j}| = 1}^{\infty} \vc{j} |\vc{k}_\vc{j}| = \vc{n}\), we rewrite 
\(\prod_{|\vc{j}|=1}^\infty\left(\vc{a}^\vc{j} b\right)^{|\vc{k}_\vc{j}|}\) 
as 
\(\vc{a}^\vc{n} b^{|\vc{k}|}\)
which gives
\begin{align*}
   \vc{n}! \sum_{\vc{k}_\vc{j} \in K_{\vc{n},\vc{k}}} \prod_{|\vc{j}| = 1}^\infty 
   \frac{1}{\vc{k}_\vc{j}!}
   \left(\vc{a}^\vc{j} b\right)^{|\vc{k}_\vc{j}|} 
   \left(   
      \frac{\vc{x}_\vc{j}}{\vc{j}!}   
   \right)^{\vc{k}_\vc{j}}
   &=
   \vc{n}! 
   \sum_{\vc{k}_\vc{j} \in K_{\vc{n},\vc{k}}}
      \vc{a}^\vc{n} b^{|\vc{k}|} 
      \prod_{|\vc{j}| = 1}^\infty 
         \frac{1}{\vc{k}_\vc{j}!} 
         \left(   
            \frac{\vc{x}_\vc{j}}{\vc{j}!}   
         \right)^{\vc{k}_\vc{j}}
   \\
   &=
   \vc{a}^\vc{n} b^{|\vc{k}|}
   \vc{n}! 
   \sum_{\vc{k}_\vc{j} \in K_{\vc{n},\vc{k}}} 
      \prod_{|\vc{j}| = 1}^\infty 
         \frac{1}{\vc{k}_\vc{j}!} 
         \left(   
            \frac{\vc{x}_\vc{j}}{\vc{j}!}   
         \right)^{\vc{k}_\vc{j}}
   \\
   &=
   \vc{a}^\vc{n} b^{|\vc{k}|}
   \B_{\vc{n},\vc{k}}( \vc{x}_\vc{j} ; \vc{j} ).
\end{align*}
\end{proof}

The following theorem guarantees that, if \(\vc{n}\) and \(\vc{k}\) are essentially one dimensional---only one component of each is nonzero---then the multivariate Bell polynomial agrees with the one-dimensional variant in Definition \ref{def:1DBell}.
\begin{theorem}
If \(n,k \in \mathbb{N}\) and \(\vc{e}_\alpha\) and \(\vc{e}_\beta\) are unit vectors in \(\mathbb{N}^{d_1}\) and \(\mathbb{N}^{d_2}\) respectively, then 
\begin{equation*}
   \B_{n\vc{e}_\alpha,k\vc{e}_\beta}(\vc{x}_\vc{j} ; \vc{j}) 
   = 
   B_{n,k}(\vc{e}_\alpha \bigcdot \vc{x}_{j \vc{e}_\alpha} ; j)
\end{equation*}
where \(B_{n,k}\) is the single-variable Bell polynomial in Definition \ref{def:1DBell} and \(\bigcdot\) is the standard dot product.
\end{theorem}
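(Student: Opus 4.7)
The strategy is to unpack the two defining conditions of the solution set $K_{n\vc{e}_\alpha,k\vc{e}_\beta}$ to show that the only $\vc{k}_\vc{j}$ contributing to the sum are supported on indices of the form $\vc{j} = j\vc{e}_\alpha$ and take values of the form $k_j\vc{e}_\beta$. Once this support reduction is done, a bijection between $K_{n\vc{e}_\alpha,k\vc{e}_\beta}$ and the scalar solution set $K_{n,k}$ from Definition~\ref{def:1DBell} falls out, and the rest is term-by-term simplification of the product using the definitions of multi-index factorial and multi-index exponent.

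First I would analyze the two constraints. The equation $\sum_{|\vc{j}|=1}^\infty \vc{j}\,|\vc{k}_\vc{j}| = n\vc{e}_\alpha$ is a vector equation with non-negative entries on the left, so the $\gamma$-th component vanishes for every $\gamma\neq\alpha$. This forces $|\vc{k}_\vc{j}| = 0$, and therefore $\vc{k}_\vc{j} = \vc{0}$, whenever $\vc{j}$ has any nonzero component outside position $\alpha$; equivalently, only $\vc{j} = j\vc{e}_\alpha$ with $j\in\mathbb{N}\setminus\{0\}$ can give a nonzero contribution. Next, apply the same non-negativity reasoning to the equation $\sum_{|\vc{j}|=1}^\infty \vc{k}_\vc{j} = k\vc{e}_\beta$: the $\gamma$-th component ($\gamma\neq\beta$) of every surviving $\vc{k}_{j\vc{e}_\alpha}$ must be zero, so we may write $\vc{k}_{j\vc{e}_\alpha} = k_j\vc{e}_\beta$ for some $k_j\in\mathbb{N}$. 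Substituting these reductions back into the two conditions yields exactly $\sum_{j=1}^\infty j k_j = n$ and $\sum_{j=1}^\infty k_j = k$, which defines $K_{n,k}$ in Definition~\ref{def:1DBell}. This gives the desired bijection $\vc{k}_\vc{j} \leftrightarrow (k_j)_{j\geq 1}$.

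With the index set identified, I would simplify the product. Using Definition~\ref{def:veclen}, $(n\vc{e}_\alpha)! = n!$, $(j\vc{e}_\alpha)! = j!$, and $(k_j\vc{e}_\beta)! = k_j!$. For $\vc{j}$ not of the form $j\vc{e}_\alpha$, the factor in $\prod_{|\vc{j}|=1}^\infty$ is $\vc{x}_\vc{j}^{\vc{0}}/\vc{0}! = 1$ by Definition~\ref{def:vecexp}, and so such factors can be dropped. For the remaining factors, Definition~\ref{def:vecexp} gives
\begin{equation*}
\left(\frac{\vc{x}_{j\vc{e}_\alpha}}{j!}\right)^{k_j\vc{e}_\beta}
=
\left(\frac{(\vc{x}_{j\vc{e}_\alpha})_\beta}{j!}\right)^{k_j},
\end{equation*}
which is the scalar power appearing in Definition~\ref{def:1DBell} with the sequence $x_j = (\vc{x}_{j\vc{e}_\alpha})_\beta$, i.e., the scalar obtained by extracting the $\beta$-component of $\vc{x}_{j\vc{e}_\alpha}$ (this is precisely what the notation $\vc{e}_\alpha\bigcdot\vc{x}_{j\vc{e}_\alpha}$ in the statement denotes, reading the unit vector in the ambient space of $\vc{x}_\vc{j}$). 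Collecting the pieces gives the one-dimensional formula $B_{n,k}$ on the right-hand side.

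The main obstacle is the bookkeeping in the first step: one must argue cleanly that \emph{both} defining conditions of $K_{\vc{n},\vc{k}}$ collapse support in the expected way, and do so via the non-negativity of every term in both sums. Once that support collapse is proved, the remaining calculation is mechanical application of the multi-index definitions from Section~\ref{sect:conventions}.
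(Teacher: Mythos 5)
Your proposal is correct and follows essentially the same route as the paper's own proof: collapse the support of $\vc{k}_\vc{j}$ onto $\vc{j}=j\vc{e}_\alpha$ and values $k_j\vc{e}_\beta$ via the two defining conditions, identify the resulting index set with $K_{n,k}$, and simplify the product with the multi-index definitions (your treatment of the non-negativity argument is in fact slightly more explicit than the paper's). Your parenthetical observation that the extracted component is the $\beta$-th one, so the dot product should read $\vc{e}_\beta\bigcdot\vc{x}_{j\vc{e}_\alpha}$, matches the paper's own intermediate display and correctly flags the $\vc{e}_\alpha$ in the theorem statement as a notational slip.
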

\begin{proof}
We start by examining the conditions on \(\vc{k}_\vc{j}\). From Definition \(\ref{def:belldef}\), we know that 
\begin{equation*}
\sum_{|\vc{j}|=1}^\infty \vc{k}_\vc{j} = k \vc{e}_\beta
\hspace{2 em}\textrm{ and }\hspace{2 em}
\sum_{|\vc{j}| = 1}^\infty \vc{j} |\vc{k}_\vc{j}| = n \vc{e}_\alpha
\end{equation*}
which means that \(\vc{k}_\vc{j}\) must always point solely in the direction of \(\vc{e}_\beta\) and \(\vc{j}\) must point solely in the direction of \(\vc{e}_\alpha\). Because of these conditions, we are able to reduce the multivariate function \(\vc{k}_\vc{j}\) to be defined in terms of the single-variate function \(k_j\) so that 
\begin{equation*}
   \vc{k}_\vc{j} = \begin{cases} 
   k_j \vc{e}_\beta  &  \vc{j} = j\vc{e}_\alpha \\
   \vc{0} & \textrm{else}
   \end{cases}.
\end{equation*} 
Thus these conditions become 
\begin{equation*}
\sum_{j=1}^\infty k_j = k
\hspace{2 em}\textrm{ and }\hspace{2 em}
\sum_{j = 1}^\infty jk_j = n.
\end{equation*}
This gives us 
\begin{equation*}
   \B_{n\vc{e}_\alpha,k\vc{e}_\beta}(\vc{x}_\vc{j} ; \vc{j})
   =
   n! 
   \sum_{k_j \in K_{{n,k}}} 
      \prod_{j = 1}^\infty 
         \frac{1}{k_j!} 
         \left(   
            \frac{\vc{x}_{j\vc{e}_\alpha}}{j!}   
         \right)^{k_j\vc{e}_\beta}
\end{equation*}
which, by the multi-index operations defined in Definition \ref{def:veclen}, gives 
\begin{equation*}
   \B_{n\vc{e}_\alpha,k\vc{e}_\beta}(\vc{x}_\vc{j} ; \vc{j})
   =
   n! 
   \sum_{k_j \in K_{{n,k}}} 
      \prod_{j = 1}^\infty 
         \frac{1}{k_j!} 
         \left(   
            \frac{\vc{e}_\beta \bigcdot \vc{x}_{j\vc{e}_\alpha}}{j!}   
         \right)^{k_j}
   =
   B_{n,k}(\vc{e}_\alpha \bigcdot \vc{x}_{j \vc{e}_\alpha} ; j).
\end{equation*}
\end{proof}

\section{Multivariate Fa\'a di Bruno}
\label{sect:bruno}

We now turn to the motivation for defining the multivariate Bell polynomial: it simplifies the statement of the multivariate Fa\'a di Bruno formula and provides a novel proof of the same. We base our proof strategy on an analogous proof in \cite{charalambides_enumerative_2002}. Before we state the multivariate Fa\'a di Bruno formula, we first prove a preliminary result which significantly shortens the proof. 

\begin{lemma}\label{thm:genfun}
Given some constant \(\vc{u} \in \mathbb{F}^{d_2}\) and analytic multivariate function \(\vc{f}:U \subseteq \mathbb{F}^{d_1}\rightarrow\mathbb{F}^{d_2}\) so that \(\vc{f}(\vc{x}) = \sum_{|\vc{n}| = 0}^\infty \vc{f}_\vc{n} \frac{\vc{x}^\vc{n}}{\vc{n}!} \) where \(\vc{f}_\vc{n} \in \mathbb{F}^{d_2}\) are constant Taylor coefficients, then 
\begin{equation*}
\sum_{|\vc{n}| = 0}^\infty 
	\sum_{|\vc{k}| = 0}^{|\vc{n}|}
		\B_{\vc{n},\vc{k}}(
			\vc{f}_\vc{j};\vc{j}
		)
		\vc{u}^\vc{k}\frac{\vc{x}^\vc{n}}{\vc{n}!}
=
\exp( \vc{u}\bigcdot(\vc{f}(\vc{x}) - \vc{f}_\vc{0}) )
\end{equation*}
where \(\exp\) is the standard exponential function and \(\bigcdot\) is the standard dot product.
\end{lemma}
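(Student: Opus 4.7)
My strategy is to unfold the partial Bell polynomials via Definition~\ref{def:belldef}, reorganize the double sum, and reduce everything to the power series for $\exp$. Concretely, I would proceed in three steps: first absorb $\vc{x}^\vc{n}/\vc{n}!$ into the product appearing in $\B_{\vc{n},\vc{k}}$; then sum over $\vc{n}$ for fixed $\vc{k}$ and recognize a vector multinomial expansion of $(\vc{f}(\vc{x})-\vc{f}_\vc{0})^\vc{k}$; finally sum over $\vc{k}$ weighted by $\vc{u}^\vc{k}$ and split the resulting sum into $d_2$ scalar exponentials.

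\textbf{Steps 1 and 2.} The key observation in Step~1 is that any $\vc{k}_\vc{j}\in K_{\vc{n},\vc{k}}$ satisfies $\vc{n}=\sum_{|\vc{j}|\geq 1}\vc{j}|\vc{k}_\vc{j}|$, so $\vc{x}^\vc{n}=\prod_{|\vc{j}|\geq 1}(\vc{x}^\vc{j})^{|\vc{k}_\vc{j}|}$, and using the componentwise identity $\vc{f}_\vc{j}^{\vc{k}_\vc{j}}(\vc{x}^\vc{j})^{|\vc{k}_\vc{j}|}=(\vc{f}_\vc{j}\vc{x}^\vc{j})^{\vc{k}_\vc{j}}$ from Definition~\ref{def:vecexp}, the factor $\vc{x}^\vc{n}/\vc{n}!$ is absorbed cleanly into the product defining $\B_{\vc{n},\vc{k}}$. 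In Step~2, summing over all $\vc{n}$ releases the constraint $\sum\vc{j}|\vc{k}_\vc{j}|=\vc{n}$ entirely, leaving only $\sum\vc{k}_\vc{j}=\vc{k}$; viewing $\vc{k}_\vc{j}$ componentwise as $d_2$ independent scalar weight assignments and applying the ordinary multinomial theorem in each slot collapses the remaining inner sum to $\frac{1}{\vc{k}!}(\vc{f}(\vc{x})-\vc{f}_\vc{0})^\vc{k}$.

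\textbf{Step 3.} Since $\B_{\vc{n},\vc{k}}=0$ whenever $|\vc{k}|>|\vc{n}|$, the constraint $|\vc{k}|\leq|\vc{n}|$ in the statement is automatic and the $\vc{k}$-sum can be taken over all of $\mathbb{N}^{d_2}$. The term $\vc{u}^\vc{k}(\vc{f}(\vc{x})-\vc{f}_\vc{0})^\vc{k}/\vc{k}!$ factors componentwise as $\prod_i(u_i(f_i(\vc{x})-(f_\vc{0})_i))^{k_i}/k_i!$, so summing over $\vc{k}\in\mathbb{N}^{d_2}$ splits into $d_2$ scalar exponential series, and the product $\prod_i\exp(u_i(f_i(\vc{x})-(f_\vc{0})_i))$ recombines into $\exp(\vc{u}\bigcdot(\vc{f}(\vc{x})-\vc{f}_\vc{0}))$.

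\textbf{Main obstacle.} The delicate bookkeeping lies in Step~2: one must carefully match a sum indexed by functions $\vc{k}_\vc{j}:\mathbb{N}^{d_1}\setminus\{\vc{0}\}\to\mathbb{N}^{d_2}$ against a single vector multinomial expansion of a $\vc{k}$-th power. Decomposing each $\vc{k}_\vc{j}$ into its $d_2$ scalar components and applying the familiar scalar multinomial theorem in each component separately is what makes the calculation tractable; without this factorization trick the indexing becomes hopelessly entangled. Analyticity of $\vc{f}$ guarantees absolute convergence on the Taylor domain, which justifies every rearrangement of sums along the way.
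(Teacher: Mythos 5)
Your proposal is correct, but it reorganizes the argument along a genuinely different axis than the paper. Both proofs begin the same way: unfold $\B_{\vc{n},\vc{k}}$ and use the constraints $\vc{n}=\sum_{|\vc{j}|\geq 1}\vc{j}|\vc{k}_\vc{j}|$, $\vc{k}=\sum_{|\vc{j}|\geq 1}\vc{k}_\vc{j}$ to absorb $\vc{x}^{\vc{n}}$ (and, in the paper, also $\vc{u}^{\vc{k}}$) into the product over $\vc{j}$. From there the paper merges the sums over $\vc{n}$ and $\vc{k}$ into a single sum over all finitely supported functions $\vc{k}_\vc{j}$, transposes that sum with the infinite product over $\vc{j}$, recognizes each $\vc{j}$-factor as $\exp\bigl(\frac{\vc{x}^{\vc{j}}}{\vc{j}!}\vc{f}_\vc{j}\bigcdot\vc{u}\bigr)$, and collapses the product of exponentials; the delicate point there is justifying the sum--product exchange, which requires showing that the spurious terms with infinitely many nonzero $\fr{k}_\vc{j}$ contribute zero. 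You instead hold $\vc{k}$ fixed, observe that $\bigcup_{\vc{n}}K_{\vc{n},\vc{k}}$ is exactly the set of $\vc{k}_\vc{j}$ with $\sum_\vc{j}\vc{k}_\vc{j}=\vc{k}$ (all of which have support of size at most $|\vc{k}|$), and identify the resulting sum, component by component in $\mathbb{F}^{d_2}$, as the multinomial expansion of $\frac{1}{\vc{k}!}(\vc{f}(\vc{x})-\vc{f}_\vc{0})^{\vc{k}}$; the final sum over $\vc{k}\in\mathbb{N}^{d_2}$ is then a product of $d_2$ scalar exponential series. What your route buys is the complete avoidance of the infinite-product transposition and its vanishing-tail argument, since every fixed-$\vc{k}$ sum is over finitely supported configurations; the price is that you must invoke the multinomial theorem for a power of an infinite series, which is legitimate but does need the absolute convergence of $\sum_{|\vc{j}|\geq 1}\frac{\vc{x}^{\vc{j}}}{\vc{j}!}\vc{f}_\vc{j}$ on the interior of the domain of convergence (as does the reordering of the outer double sum from $\vc{n}$-then-$\vc{k}$ to $\vc{k}$-then-$\vc{n}$); you flag this correctly, and a full write-up should state it explicitly rather than leave it as a closing remark. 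Your componentwise identity $\vc{f}_\vc{j}^{\vc{k}_\vc{j}}(\vc{x}^{\vc{j}})^{|\vc{k}_\vc{j}|}=(\vc{f}_\vc{j}\vc{x}^{\vc{j}})^{\vc{k}_\vc{j}}$ and the decoupling of $\sum_\vc{j}\vc{k}_\vc{j}=\vc{k}$ into $d_2$ independent scalar constraints both check out, so I see no gap.
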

\begin{proof}
We proceed by Definition \ref{def:belldef}. 
\begin{equation*}
   \B_{\vc{n},\vc{k}}(
			\vc{f}_\vc{j};\vc{j}
		)
		\vc{u}^\vc{k}\frac{\vc{x}^\vc{n}}{\vc{n}!}
   =
   \vc{n}! 
   \sum_{\vc{k}_\vc{j} \in K_{\vc{n},\vc{k}}} 
      \left(
      \prod_{|\vc{j}|=1}^\infty 
	   \frac{1}{\vc{k}_\vc{j}!} 
	   \left(  
	      \frac{\vc{f}_\vc{j}}{\vc{j}!}  
	   \right)^{\vc{k}_\vc{j}}
      \right)
   \vc{u}^\vc{k}\frac{\vc{x}^\vc{n}}{\vc{n}!}.
\end{equation*}
Because of the definition of the solution set \(K_{\vc{n},\vc{k}}\), 
we are able to split the \(\vc{n}\) and 
\(\vc{k}\) in the exponents of \(\vc{u}^\vc{k}\) and \(\vc{x}^\vc{n}\) into \(\vc{n} = \sum_{|\vc{j}| = 1}^\infty \vc{j} |\vc{k}_\vc{j}|\) and \(\vc{k} = \sum_{|\vc{j}|=1}^\infty \vc{k}_\vc{j}\) which allows us to incorporate the  \(\vc{u}^\vc{k}\) and \(\vc{x}^\vc{n}\) terms into the sum and product as 
\begin{equation*}
\B_{\vc{n},\vc{k}}(
			\vc{f}_\vc{j};\vc{j}
		)
		\vc{u}^\vc{k}\frac{\vc{x}^\vc{n}}{\vc{n}!}
= 
\sum_{\vc{k}_\vc{j} \in K_{\vc{n},\vc{k}}} 
\prod_{|\vc{j}|=1}^\infty 
	\frac{1}{\vc{k}_\vc{j}!} 
	\left(  
		\frac{\vc{x}^{\vc{j}}}{\vc{j}!} \vc{f}_\vc{j}*\vc{u}  
	\right)^{\vc{k}_\vc{j}}
\end{equation*}
such that \(*\) denotes component-wise multiplication resulting in a vector of the same dimension. Placing the \(\B_{\vc{n},\vc{k}}(\vc{f}_\vc{j};\vc{j})\vc{u}^\vc{k}\frac{\vc{x}^\vc{n}}{\vc{n}!}\) back into the double sum whence it came, because we are summing over all \(\vc{n}\) and \(\vc{k}\), there is no restriction in our choice of function \(\vc{k}_\vc{j}\) other than that for sufficiently large values of \(|\vc{j}|\), \(\vc{k}_\vc{j} = \vc{0}\). Thus we can rewrite the sum in \(\vc{n}\) and \(\vc{k}\) as a sum of all such functions \(\vc{k}_\vc{j} \in K = \{ \vc{k}_\vc{j}: \mathbb{N}^{d_1}\setminus \{\vc{0}\} \rightarrow \mathbb{N}^{d_2} | \sum_{|\vc{j}|=0}^\infty |\vc{k}_\vc{j}| \in \mathbb{N} \}\) 
\begin{align*}
\sum_{|\vc{n}| = 0}^\infty 
	\sum_{|\vc{k}| = 0}^{|\vc{n}|}
		\B_{\vc{n},\vc{k}}(
			\vc{f}_\vc{j};\vc{j}
		)
		\vc{u}^\vc{k}\frac{\vc{x}^\vc{n}}{\vc{n}!}
&=
\sum_{|\vc{n}| = 0}^{\infty}
\sum_{\vc{k}_\vc{j} \in K_{\vc{n},\vc{k}}} 
\prod_{|\vc{j}|=1}^\infty 
	\frac{1}{\vc{k}_\vc{j}!} 
	\left(  
		\frac{\vc{x}^{\vc{j}}}{\vc{j}!} \vc{f}_\vc{j}*\vc{u}  
	\right)^{\vc{k}_\vc{j}}
\\
&=
\sum_{\vc{k}_\vc{j} \in K}
\prod_{|\vc{j}|=1}^\infty 
	\frac{1}{\vc{k}_\vc{j}!} 
	\left(  
		\frac{\vc{x}^{\vc{j}}}{\vc{j}!} \vc{f}_\vc{j}*\vc{u}  
	\right)^{\vc{k}_\vc{j}}.
\end{align*}

We would like to transpose sum and product in order to use the Taylor expansion of the exponential function to simplify the above result. Thus we must show that, for \(\fr{k} \in \mathbb{N}^{d_2}\)
\begin{equation}\label{eqn:switch}
\sum_{\vc{k}_\vc{j} \in K}
\prod_{|\vc{j}|=1}^\infty 
	\frac{1}{\vc{k}_\vc{j}!} 
	\left(  
		\frac{\vc{x}^{\vc{j}}}{\vc{j}!} \vc{f}_\vc{j}*\vc{u}  
	\right)^{\vc{k}_\vc{j}}
=
\prod_{|\vc{j}|=1}^\infty
\sum_{|\fr{k}|=0}^{\infty} 
	\frac{1}{\fr{k}!} 
	\left(  
		\frac{\vc{x}^{\vc{j}}}{\vc{j}!} \vc{f}_\vc{j}*\vc{u}  
	\right)^{\fr{k}}.
\end{equation}
In order to show equality, we need to find a correspondence between the terms on the left and right sides of (\ref{eqn:switch}). Because each factor on the right is indexed by \(\vc{j}\), there is a different value of \(\fr{k}\) for each value of \(\vc{j}\) thus there is a unique function \(\fr{k}_\vc{j}: \mathbb{N}^{d_1}\setminus \{\vc{0}\} \rightarrow \mathbb{N}^{d_2}\) that maps \(\vc{j} \mapsto \fr{k}\). Thus there is an onto map from the right hand side of (\ref{eqn:switch}) to the left hand side. However, there are terms on the right which do not appear on the left. Any term which has an infinite number of \(\fr{k} \neq \vc{0}\) does not appear on the left hand side of (\ref{eqn:switch}). Thus we must show that if there are an infinite number of indices \(\vc{j}\) so that the corresponding \(\fr{k} = \fr{k}_\vc{j}\) is non-zero, then the corresponding product vanishes. 

In this case, we want to show that, if \(J = \{\vc{j} | \fr{k}_\vc{j} \neq \vc{0}\}\) is infinite,
\begin{equation} \label{eqn:zeroProduct}
\prod_{\vc{j} \in J}
	\frac{1}{\fr{k}_\vc{j}!} 
	\left(  
		\frac{\vc{x}^{\vc{j}}}{\vc{j}!} \vc{f}_\vc{j}*\vc{u}  
	\right)^{\fr{k}_\vc{j}}
=
0.
\end{equation}
We will be done if we show that 
\(
   \frac{1}{\fr{k}_\vc{j}!} 
	\left(  
		\frac{\vc{x}^{\vc{j}}}{\vc{j}!} \vc{f}_\vc{j}*\vc{u}  
	\right)^{\fr{k}_\vc{j}}
\)
tends towards \(0\) as \(|\vc{j}|\) goes to infinity. Considering the different factors individually, because \(\fr{k}_\vc{j}! \geq 1\), we have that 
\(\frac{1}{\fr{k}_\vc{j}!} \leq 1\). 
Furthermore, \(\vc{u}\) has some number \(u_0\) such that \(|u_i| < u_0\) for all components \(u_i\) of \(\vc{u}\). Thus 
\(|\vc{f}_\vc{j}*\vc{u}| \leq u_0 |\vc{f}_\vc{j}|\). 
Finally, because \(\vc{f}\) is analytic for \(\vc{x} \in U\), the sum 
\(\sum_{|\vc{j}| = 0}^{\infty}\frac{\vc{x}^{\vc{j}}}{\vc{j}!} \vc{f}_\vc{j}\) 
converges, so 
\(\frac{\vc{x}^{\vc{j}}}{\vc{j}!} \vc{f}_\vc{j}\) 
goes to \(\vc{0}\) as \(|\vc{j}|\) tends towards infinity as does each of its components. Therefore, for sufficiently large \(|\vc{j}|\), each component of \(
   \frac{1}{\fr{k}_\vc{j}!} 
	\left(  
		\frac{\vc{x}^{\vc{j}}}{\vc{j}!} \vc{f}_\vc{j}*\vc{u}  
	\right)^{\fr{k}_\vc{j}}
\)
will be closer to \(0\) than \(\frac{1}{u_0}\), and so 
\begin{equation*}
   \left|
      \frac{1}{\fr{k}_\vc{j}!} 
	   \left(  
		   \frac{\vc{x}^{\vc{j}}}{\vc{j}!} \vc{f}_\vc{j}*\vc{u}  
	   \right)^{\fr{k}_\vc{j}}
	\right|
	\leq
	\left|
	   \left(  
	   	u_0 
		   \frac{\vc{x}^{\vc{j}}}{\vc{j}!} \vc{f}_\vc{j} 
	   \right)^{\fr{k}_\vc{j}}
	\right|
	\leq
	\max_{1 \leq i \leq d_1}
	\left|  
		u_0 
		\frac{\vc{x}^{\vc{j}}}{\vc{j}!} \vc{f}_\vc{j} \bigcdot \vc{e}_i
	\right|
\end{equation*}
where \(\max\) selects the component with the greatest absolute value. Because each component of \(\frac{\vc{x}^{\vc{j}}}{\vc{j}!} \vc{f}_\vc{j}\) tends towards \(0\) as \(|\vc{j}|\) tends towards infinity, 
\begin{equation*}
   \lim_{|\vc{j}| \rightarrow \infty}
   \frac{1}{\fr{k}_\vc{j}!} 
	   \left(  
		   \frac{\vc{x}^{\vc{j}}}{\vc{j}!} \vc{f}_\vc{j}*\vc{u}  
	   \right)^{\fr{k}_\vc{j}}
	=
	0
\end{equation*}
Therefore, because every product which has infinitely-many non-zero \(\fr{k}\) disappears, (\ref{eqn:zeroProduct}) is fulfilled and thus so too is (\ref{eqn:switch}), so we are able to transpose the sum and product.

The sum on the right hand side of (\ref{eqn:switch}) defines the function 
\begin{equation*}
   \sum_{|\vc{i}| = 0}^\infty \frac{1}{\vc{i}!} \vc{x}^\vc{i}
   = 
   \exp(\vc{1} \bigcdot \vc{x}),
\end{equation*}
so we can write 
\(
\sum_{|\vc{n}| = 0}^\infty 
	\sum_{|\vc{k}| = 0}^{|\vc{n}|}
		\B_{\vc{n},\vc{k}}(
			\vc{f}_\vc{j};\vc{j}
		)
		\vc{u}^\vc{k}\frac{\vc{x}^\vc{n}}{\vc{n}!}
\) 
as 
\begin{align*}
\sum_{|\vc{n}| = 0}^\infty 
	\sum_{|\vc{k}| = 0}^{|\vc{n}|}
		\B_{\vc{n},\vc{k}}(
			\vc{f}_\vc{j};\vc{j}
		)
		\vc{u}^\vc{k}\frac{\vc{x}^\vc{n}}{\vc{n}!}
&=
\prod_{|\vc{j}|=1}^\infty
\sum_{|\fr{k}|=0}^{\infty} 
	\frac{1}{\fr{k}!} 
	\left(  
		\frac{\vc{x}^{\vc{j}}}{\vc{j}!} \vc{f}_\vc{j}*\vc{u}  
	\right)^{\fr{k}}
\\
&=
\prod_{|\vc{j}|=1}^\infty
\exp
\left(
   \vc{1}
   \bigcdot
   \left(
   \frac{\vc{x}^{\vc{j}}}{\vc{j}!} \vc{f}_\vc{j}*\vc{u}
   \right)
\right)
\\
&=
\prod_{|\vc{j}|=1}^\infty
\exp\left(\frac{\vc{x}^{\vc{j}}}{\vc{j}!} \vc{f}_\vc{j}\bigcdot\vc{u}\right).
\end{align*}
Finally, moving the product inside the exponential as a sum completes the lemma.
\end{proof}

We are now equipped to prove a compact form of the multivariate Fa\'a di Bruno formula.

\begin{theorem}[Multivariate Fa\'a di Bruno formula]
\label{thm:faa}
Let \(U\) and \(V\) be subsets of \(\mathbb{F}^{d_1}\) and \(\mathbb{F}^{d_2}\), respectively, and let \(\vc{g}:U \rightarrow V\) and \(\vc{f}:V \rightarrow \mathbb{F}^{d_3}\) be sufficiently differentiable functions. Then the repeated application of the derivative yields
\begin{equation}\label{eqn:faa}
   \partial_\vc{x}^\vc{n} \vc{f}(\vc{g}(\vc{x})) 
   = 
   \sum_{|\vc{k}|=0}^{|\vc{n}|} \vc{f}^{(\vc{k})}(\vc{g}(\vc{x})) 
   \, 
   \B_{\vc{n},\vc{k}} ( \vc{g}^{(\vc{j})}(\vc{x});\vc{j}).
\end{equation}
\end{theorem}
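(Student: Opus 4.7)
The plan is to derive (\ref{eqn:faa}) by matching Taylor coefficients of $\vc{f}(\vc{g}(\vc{x}))$ expanded in two different ways, using Lemma \ref{thm:genfun} as the bridge. Without loss of generality the base point can be taken to be $\vc{0}$; the statement at an arbitrary $\vc{x}_0$ then follows by translating $\vc{g}$. I would first work under the simplifying assumption that $\vc{f}$ and $\vc{g}$ are analytic in a neighborhood of $\vc{g}(\vc{0})$ and $\vc{0}$ respectively and postpone the ``sufficiently differentiable'' case to the end.

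First, applying Lemma \ref{thm:genfun} with $\vc{g}$ in place of $\vc{f}$ writes $\exp(\vc{u}\bigcdot(\vc{g}(\vc{x})-\vc{g}(\vc{0})))$ as a double power series in $\vc{x}$ and $\vc{u}$ whose coefficients are the partial multivariate Bell polynomials of the derivatives $\vc{g}^{(\vc{j})}(\vc{0})$. On the other hand, since $\exp(\vc{u}\bigcdot\vc{w})=\sum_{\vc{k}}\vc{u}^\vc{k}\vc{w}^\vc{k}/\vc{k}!$ by componentwise factoring of the exponential, reading off the coefficient of $\vc{u}^\vc{k}$ on both sides gives the auxiliary identity
\begin{equation*}
   \frac{(\vc{g}(\vc{x})-\vc{g}(\vc{0}))^\vc{k}}{\vc{k}!}
   =
   \sum_{|\vc{n}|\ge|\vc{k}|}^\infty \B_{\vc{n},\vc{k}}\bigl(\vc{g}^{(\vc{j})}(\vc{0});\vc{j}\bigr)\,\frac{\vc{x}^\vc{n}}{\vc{n}!},
\end{equation*}
which is the multivariate analog of the classical generating function for $B_{n,k}$.

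Next I would Taylor expand $\vc{f}$ about $\vc{y}_0=\vc{g}(\vc{0})$, substitute $\vc{y}=\vc{g}(\vc{x})$, and replace each factor $(\vc{g}(\vc{x})-\vc{g}(\vc{0}))^\vc{k}/\vc{k}!$ by the series above. Swapping the $\vc{n}$- and $\vc{k}$-summations (justified by absolute convergence in a small enough neighborhood of $\vc{0}$) regroups the result into a series in $\vc{x}^\vc{n}/\vc{n}!$ whose bracketed coefficient is exactly $\sum_{|\vc{k}|\le|\vc{n}|}\vc{f}^{(\vc{k})}(\vc{g}(\vc{0}))\,\B_{\vc{n},\vc{k}}(\vc{g}^{(\vc{j})}(\vc{0});\vc{j})$. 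Matching this against the Taylor series of $\vc{f}\!\circ\!\vc{g}$ at $\vc{0}$ produced directly by iterated differentiation, and invoking uniqueness of Taylor coefficients, yields (\ref{eqn:faa}) at $\vc{x}=\vc{0}$; translation extends it to any point.

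The principal obstacle is that Lemma \ref{thm:genfun} presumes analyticity, while the theorem only assumes that $\vc{f}$ and $\vc{g}$ are sufficiently differentiable. I would close this gap by a jet-realization argument: both sides of (\ref{eqn:faa}), evaluated at a fixed $\vc{x}$, depend only on the partial derivatives of $\vc{f}$ at $\vc{g}(\vc{x})$ and of $\vc{g}$ at $\vc{x}$ of total order at most $|\vc{n}|$, and every such finite jet can be realized by polynomial (hence analytic) functions; consequently the identity proved in the analytic setting forces the identity for any $C^{|\vc{n}|}$ pair. A secondary bookkeeping point is the justification of the sum swap, which reduces to absolute convergence of the auxiliary identity on a common neighborhood of $\vc{0}$ and is automatic once analyticity is assumed.
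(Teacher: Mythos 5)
Your proposal is correct in outline but follows a genuinely different route from the paper. The paper first proves, by induction on \(|\vc{n}|\) using the chain and product rules, that \(\partial_\vc{x}^\vc{n}\vc{f}(\vc{g}(\vc{x})) = \sum_{|\vc{k}|=0}^{|\vc{n}|}\vc{f}^{(\vc{k})}(\vc{g}(\vc{x}))\,P_{\vc{n},\vc{k}}(\vc{g}^{(\vc{j})}(\vc{x});\vc{j})\) for some universal polynomials \(P_{\vc{n},\vc{k}}\) depending only on \(\vc{n}\) and \(\vc{k}\), and then identifies \(P_{\vc{n},\vc{k}}\) with \(\B_{\vc{n},\vc{k}}\) by testing against the single analytic family \(f(\vc{x})=\exp(\vc{u}\bigcdot(\vc{x}-\vc{x}_0))\) and invoking Lemma \ref{thm:genfun}. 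You instead extract from Lemma \ref{thm:genfun} the generating function of \(\B_{\vc{n},\vc{k}}\) itself (your auxiliary identity for \((\vc{g}(\vc{x})-\vc{g}(\vc{0}))^{\vc{k}}/\vc{k}!\)), compose it with the Taylor series of a general analytic \(\vc{f}\), and match coefficients; this is cleaner in the analytic case and makes the classical generating-function picture explicit. The trade-off surfaces in your jet-realization step: to transfer the identity from polynomial data to arbitrary \(C^{|\vc{n}|}\) pairs, you need to know that \(\partial_\vc{x}^\vc{n}\vc{f}(\vc{g}(\vc{x}))\) at a fixed point is a universal polynomial expression in the derivatives of \(\vc{f}\) at \(\vc{g}(\vc{x})\) and of \(\vc{g}\) at \(\vc{x}\) of total order at most \(|\vc{n}|\) --- and that is precisely the content of the paper's inductive Part 1. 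So your argument does not avoid that induction; it relocates it into the true but unproved assertion that the left-hand side depends only on the finite jets. If you supply that induction (or an explicit citation for it), the rest of your sketch is sound: the coefficient-of-\(\vc{u}^{\vc{k}}\) extraction, the sum swap, and the translation to an arbitrary base point are all unproblematic for analytic functions on a sufficiently small neighborhood.
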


\begin{proof}
We proceed by adapting a proof from \cite{charalambides_enumerative_2002} which applies to the single-variable Fa\'a di Bruno formula. We first show that the multivariate Fa\'a di Bruno formula obeys an equation similar to (\ref{eqn:faa}) for some polynomial \(P_{\vc{n},\vc{k}} (\vc{g}^{(\vc{j})}(\vc{x});\vc{j})\) which takes the components of \( \vc{g}^{(\vc{j})}(\vc{x})\) as variables. Next, we show that this polynomial must be the multivariate Bell polynomial.

\noindent \textbf{Part 1:} We proceed by induction to show that 
\begin{equation*}
\partial_\vc{x}^\vc{n} \vc{f}(\vc{g}(\vc{x})) = 
\sum_{|\vc{k}|=0}^{|\vc{n}|} 
	\vc{f}^{(\vc{k})}(\vc{g}(\vc{x})) \, P_{\vc{n},\vc{k}} (\vc{g}^{(\vc{j})}(\vc{x});\vc{j})
\end{equation*}
for some multivariate polynomial \(P\) indexed by \(\vc{n}\) and \(\vc{k}\). 

For the base case, note that 
\begin{equation*}
\partial_\vc{x}^\vc{0} \vc{f}(\vc{g}(\vc{x}))
=
\vc{f}(\vc{g}(\vc{x}))
\end{equation*}
which has polynomial term \(P_{\vc{0},\vc{0}} = 1\).

For the inductive step, if for some \(\vc{n}\),
\begin{equation*}
\partial_\vc{x}^\vc{n} \vc{f}(\vc{g}(\vc{x})) = 
\sum_{|\vc{k}|=0}^{|\vc{n}|} 
	\vc{f}^{(\vc{k})}(\vc{g}(\vc{x})) \, P_{\vc{n},\vc{k}} (\vc{g}^{(\vc{j})}(\vc{x});\vc{j}),
\end{equation*}
then for any unit multi-index \vc{e},
\begin{multline*}
   \partial_\vc{x}^{\vc{n} + \vc{e}} \vc{f}(\vc{g}(\vc{x})) = 
   \sum_{|\vc{k}|=0}^{|\vc{n}|} 
	   \left(
	     \sum_{i = 1}^{d_2} 
	     \vc{f}^{(\vc{k} + \vc{e}_i)}
	     (\vc{g}(\vc{x})) g_i^{(\vc{e})}(\vc{x})
	     \, P_{\vc{n},\vc{k}} 
	     \left(
	        \partial_\vc{x}^\vc{j}\vc{g}(\vc{x});\vc{j}
	     \right)
	   \right)
	+\\
	\vc{f}^{\vc{k}}(\vc{g}(\vc{x})) 
	\, 
	\partial_\vc{x}^\vc{e} P_{\vc{n},\vc{k}} (\vc{g}^{(\vc{j})}(\vc{x});\vc{j}).
\end{multline*}
If we rearrange the sums so that each term is preceded by \(\vc{f}^{(\vc{k})}(\vc{g}(\vc{x}))\), we have a sum that goes from \(|\vc{k}|=0\) to \(|\vc{n}|+1 = |\vc{n} + \vc{e}|\), and because we are summing and multiplying polynomials together, we get a new polynomial which depends only on \(\vc{n}\) and \(\vc{k}\). 

\noindent\textbf{Part 2:} For the second part of the proof, we show that the polynomials \(P_{\vc{n},\vc{k}}\) which appear in the first part of this proof must be the multivariate partial Bell polynomials \(\B_{\vc{n},\vc{k}}\). Because the polynomials do not depend on our choice of functions \(\vc{f}(\vc{x})\), we may choose the case 
\begin{equation*}
\vc{f}(\vc{x}) = f(\vc{x}) = \exp(\vc{u} \bigcdot (\vc{x} - \vc{x}_0))
\end{equation*}
for some \(\vc{u}, \vc{x}_0 \in \mathbb{F}^{d_2}\).
This has the Taylor expansion \begin{equation*}
   f(\vc{x})
   =
   \exp(\vc{u} \bigcdot (\vc{x} - \vc{x}_0)) 
   = 
   \sum_{|\vc{n}| = 0}^\infty 
      \vc{u}^\vc{n}
      \frac{(\vc{x} - \vc{x}_0)^\vc{n}}{\vc{n}!} .
\end{equation*}
Setting the offset term to be \(\vc{g}_0 = \vc{g}(\vc{u}_0)\) for \(\vc{u}_0 \in U\) and evaluating \(f\) at \(\vc{g}\), we have 
\begin{equation*}
   f(\vc{g}(\vc{x})) 
   = 
   \exp(\vc{u}\bigcdot (\vc{g}(\vc{x}) - \vc{g}_0))
   =
   \sum_{|\vc{n}| = 0}^\infty 
      \vc{u}^\vc{n}
      \frac{(\vc{g}(\vc{x}) - \vc{g}_0)^\vc{n}}{\vc{n}!} .
\end{equation*}
which, by Part 1, has derivatives 
\begin{equation*}
   \partial_{\vc{x}}^{\vc{n}} f(\vc{g}(\vc{x}))|_{\vc{x} = \vc{u}_0} 
   =
   \sum_{|\vc{k}| = 0}^{\vc{n}}
      \vc{u}^\vc{k}
      P_{\vc{n},\vc{k}} (\vc{g}_\vc{j};\vc{j})
\end{equation*}
such that \(\vc{g}_\vc{j} = \vc{g}^{(\vc{j})}(\vc{u}_0)\).

Separately, by Theorem \ref{thm:genfun}, we have 
\begin{equation*}
\exp( \vc{u}\bigcdot(\vc{g}(\vc{x}) - \vc{g}_\vc{0}) )
=
\sum_{|\vc{n}| = 0}^\infty 
	\sum_{|\vc{k}| = 0}^{|\vc{n}|}
		\B_{\vc{n},\vc{k}}(
			\vc{g}_\vc{j};\vc{j}
		)
		\vc{u}^\vc{k}
		\frac{\vc{x}^\vc{n}}{\vc{n}!}.
\end{equation*}
If we take the \(\vc{n}\)-th derivative and evaluate at \(\vc{x} = \vc{u}_0\), we get 
\begin{equation*}
\partial_{\vc{x}}^{\vc{n}} f(\vc{g}(\vc{x}))|_{\vc{x} = \vc{u}_0} 
=
\sum_{|\vc{k}| = 0}^{|\vc{n}|}
   \vc{u}^{\vc{k}}
	\B_{\vc{n},\vc{k}}(
		\vc{g}_\vc{j};\vc{j}
	)
	.
\end{equation*}
Therefore, equating the two expressions for \(\partial_{\vc{x}}^{\vc{n}} f(\vc{g}(\vc{x}))|_{\vc{x} = \vc{u}_0} \), we get
\begin{equation*}
   \sum_{|\vc{k}| = 0}^{|\vc{n}|}
   \vc{u}^{\vc{k}}
	\B_{\vc{n},\vc{k}}(
		\vc{g}_\vc{j};\vc{j}
	)
	=
	\sum_{|\vc{k}| = 0}^{|\vc{n}|}
   \vc{u}^{\vc{k}}
	P_{\vc{n},\vc{k}}(
		\vc{g}_\vc{j};\vc{j}
	).
\end{equation*}
The only way for this to hold for all \(\vc{u}\) and \(\vc{g}\) is if the polynomial \(P_{\vc{n},\vc{k}}\) is the multivariate Bell polynomial \(\B_{\vc{n},\vc{k}}\).
\end{proof}

\section{Conclusion}
The extension of Bell polynomials and the simplification of the multivariate Fa\'a di Bruno formula presented in this paper allows for easier manipulation of repeated derivatives of composed multivariate functions. In particular, this work can be used to find the Taylor expansion of composed multivariate functions. Future work may focus on finding an expression of the multivariate Lagrange inversion theorem which employs the multivariate Bell polynomial as well as extending this work further by finding Bell polynomials which gives the repeated derivative of the composition of more than two multivariate functions.  

\vspace{2 em}

\noindent \textbf{Acknowledgements:} The author thanks Professors Sigrun Bodine and Jacob Price at the University of Puget Sound for their assistance in finishing and presenting this work.

\newpage

\vspace{1cc}


{\small
\noindent \textbf{Aidan Schumann}\\
Department of Mathematics and Computer Science\\
University of Puget Sound\\
1500 N. Warner St. \\
Tacoma, WA \\
98416,\\
United States.\\
E-mail: aidan@schumann.com\\
}


\begin{thebibliography}{1}
\expandafter\ifx\csname natexlab\endcsname\relax\def\natexlab#1{#1}\fi
  \def\au#1{#1} 
  \def\ed#1{#1} 
  \def\yr#1{#1}
  \def\at#1{\textit{#1}.}
  \def\jt#1{#1,}
  \def\bt#1{#1}
  \def\bvol#1{\textbf{#1}} 
  \def\vol#1{#1} 
  \def\pg#1{#1}
  \def\publ#1{#1}
  \def\arxiv#1{#1}
  \def\org#1{#1}
  \def\st#1{\textit{#1}}

\bibitem{arbel_fa`di_2016}
{\small 
  \sc \au{Arbel, J.}:} 
  \at{Fa{\'a} di bruno's note on eponymous formula, trilingual version}
  \jt{arXiv:1612.05393}
  (\yr{2016}).

\bibitem{bell_exponential_1934}
{\small 
  \sc \au{Bell, E.T.}:}   
  \at{Exponential polynomials} 
  \jt{Annals of Mathematics} 
  \bvol{35}(2),
  (\yr{1934})
  \pg{258--277}.

\bibitem{bernardini_multidimensional_2005}
{\small 
  \sc \au{Bernardini, A.}, \au{Natalini, P.} \& \au{Ricci, P.E.}:} 
  \at{Multidimensional bell polynomials of higher order}  
  \jt{Computers \& Mathematics with Applications}
  \bvol{50}(10),
  (\yr{2005})
  \pg{1697--1708}.

\bibitem{charalambides_enumerative_2002} 
{\small 
  \sc \au{Charalambides, C.A.}:}  
  {\em Enumerative Combinatorics\/},
  \publ{Chapman \& Hall/{CRC}},
  (\yr{2002}).

\bibitem{constantine_multivariate_1996}
{\small 
  \sc \au{Constantine, G.} \& \au{Savits, T.}:}   
  \at{A multivariate faa di bruno formula with applications} 
  \jt{Transactions of the American Mathematical Society} 
  \bvol{348}(2), 
  (\yr{1996}) 
  \pg{503--520}.
  
\bibitem{bruno_sullo_1855}
{\small 
  \sc \au{Fa{\'a} di Bruno, F.}:}   
  \at{Sullo sviluppo delle funzioni}
  \jt{Annali di Scienze Matematiche e Fisiche}
  \bvol{6}
  (\yr{1855})
  \pg{338--344}.

\bibitem{Folland_Multi}
{\small 
  \sc \au{Folland, G.B.}:}   
  \at{Higher-order derivatives and Taylor's formula in several variables}
  \jt{Preprint}
  (\yr{2005}).

\bibitem{frabetti_five_2011}
{\small 
  \sc \au{Frabetti, A.} \& \au{Manchon, D.}:}  
  \at{Five interpretations of Fa{\`a} di Bruno's formula}
  \jt{Dyson-Schwinger Equations and Fa{\`a} di Bruno Hopf Algebras in Physics and Combinatorics}
  (\yr{2011}) 
  \pg{5--65}.

\bibitem{ma_higher_2009}
{\small 
  \sc \au{Ma, T.}:}  
  \at{Higher chain formula proved by combinatorics}  
  \jt{The Electronic Journal of Combinatorics}
  \bvol{16},
  (\yr{2009}).

\bibitem{mishkov_generalization_2000}
{\small 
  \sc \au{Mishkov, R.L.}:}   
  \at{Generalization of the formula of Fa\'a di Bruno for a composite function with a vector argument}
  \jt{International Journal of Mathematics and Mathematical Sciences}
  \bvol{24}(7)
  (\yr{2000}).

\bibitem{natalini_extension_2004}
{\small 
  \sc \au{Natalini, P.} \& \au{Ricci, P.E.}:}   
  \at{An extension of the Bell polynomials}  
  \jt{Computers \& Mathematics with Applications}
  \bvol{47}(4), 
  (\yr{2004}) 
  \pg{719--725}.

\bibitem{natalini_remarks_2016}
{\small 
  \sc \au{Natalini, P.} \& \au{Ricci, P.E.}:} 
  \at{Remarks on Bell and higher order Bell polynomials and numbers}
  \jt{Cogent Mathematics}
  \bvol{3}(1),  
  (\yr{2016}).

\bibitem{natalini_higher_2017}
{\small 
  \sc \au{Natalini, P.} \& \au{Ricci, P.E.}:} 
  \at{Higher order Bell polynomials and the relevant integer sequences}
  \jt{Applicable Analysis and Discrete Mathematics}
  \bvol{11}(2),  
  (\yr{2017})
  \pg{327--339}.

\bibitem{noschese_differentiation_2003}
{\small 
  \sc \au{Noschese, S.} \& \au{Ricci, P.E.}:} 
  \at{Differentiation of multivariable composite functions and Bell
  polynomials}  
  \jt{Journal of Computational Analysis and Applications}
  \bvol{5}(3),  
  (\yr{2003})
  \pg{333--340}.

\bibitem{riordan_derivatives_1946}
{\small 
  \sc \au{Riordan, J.}:}  
  \at{Derivatives of composite functions}
  \jt{Bulletin of the American Mathematical Society}
  \bvol{52}(8),  
  (\yr{1946})
  \pg{664--668}.

\end{thebibliography}
\end{document}